\newcommand{\ZZ}{\mathbb{Z}}
\newcommand{\QQ}{\mathbb{Q}}
\newcommand{\RR}{\mathbb{R}}
\newcommand{\CC}{\mathbb{C}}
\newcommand{\FF}{\mathbb{F}}
\newcommand{\mo}{\mathfrak{o}}
\newcommand{\mn}{\mathfrak{n}}
\newcommand{\mpr}{\mathfrak{p}}
\newcommand{\cB}{\mathcal{B}}
\newcommand{\cC}{\mathcal{C}}
\newcommand{\cD}{\mathcal{D}}
\newcommand{\cT}{\mathcal{T}}
\renewcommand{\geq}{\geqslant}
\renewcommand{\leq}{\leqslant}
\newcommand{\GL}{\mathrm{GL}}
\newcommand{\ov}[1]{\overline{#1}}
\newcommand{\Dt}{\Delta_\mathrm{tame}}
\DeclareMathOperator{\vol}{vol}
\DeclareMathOperator{\Hom}{Hom}
\DeclareMathOperator{\Gal}{Gal}
\DeclareMathOperator{\re}{Re}
\DeclareMathOperator{\im}{Im}
\DeclareMathOperator{\rank}{rank}
\theoremstyle{plain}
\newtheorem{theorem}{Theorem}
\newtheorem{proposition}{Proposition}
\newtheorem{corollary}{Corollary}
\theoremstyle{definition}
\newtheorem*{acknowledgement}{Acknowledgements}
\newtheorem*{question}{Question}
\begin{document}

\author{Miko\l aj Fraczyk}
\author{Gergely Harcos}
\author{P\'eter Maga}

\address{Alfr\'ed R\'enyi Institute of Mathematics, Hungarian Academy of Sciences, POB 127, Budapest H-1364, Hungary}\email{fraczyk@renyi.hu, gharcos@renyi.hu, magapeter@gmail.com}
\address{MTA R\'enyi Int\'ezet Lend\"ulet Groups and Graphs Research Group}\email{fraczyk@renyi.hu}
\address{MTA R\'enyi Int\'ezet Lend\"ulet Automorphic Research Group}\email{gharcos@renyi.hu, magapeter@gmail.com}
\address{Institute for Advanced Study,  Princeton NJ, USA}\email{mikolaj@ias.edu}
\address{Central European University, Nador u. 9, Budapest H-1051, Hungary}\email{harcosg@ceu.edu}

\title{Counting bounded elements of a number field}

\begin{abstract}
We estimate, in a number field, the number of elements and the maximal number of linearly independent elements, with prescribed bounds on their valuations. As a by-product, we obtain new bounds for the successive minima of ideal lattices. Our arguments combine group theory, ramification theory, and the geometry of numbers.
\end{abstract}

\subjclass[2010]{Primary 11H06, 11R04; Secondary 11R29, 11S15}

\keywords{integral bases, number fields, lattices, geometry of numbers}

\thanks{First author supported by ERC grant CoG-648017 and the MTA R\'enyi Int\'ezet Lend\"ulet Groups and Graphs Research Group.
Second and third author supported by NKFIH (National Research, Development and Innovation Office) grant K~119528 and by the MTA R\'enyi Int\'ezet Lend\"ulet Automorphic Research Group. Third author also supported by the Premium Postdoctoral Fellowship of the Hungarian Academy of Sciences.}

\maketitle

\section{Introduction}

It was a decisive moment in the history of mathematics when Minkowski~\cite{M} realized that certain geometric ideas are very powerful
in tackling difficult arithmetic problems. In particular, Minkowski~\cite{M} proved that in a number field $k$ of degree $d>1$ and discriminant $\Delta$, every ideal class can be represented by an integral ideal of norm less than $|\Delta|^{1/2}$. His proof relied on two ideas. First, the natural embedding $k\hookrightarrow k\otimes_\QQ\RR$ allows one to regard the ring of integers $\mo$ as a lattice in $\RR^d$ of covolume $|\Delta|^{1/2}$. Second, a lattice in $\RR^d$ contains a nonzero lattice point in a convex body symmetric about the origin\footnote{that is, a convex subset of $\RR^d$ invariant under multiplication by $-1$}, as long as the volume of the body exceeds $2^d$ times the covolume of the lattice. The second idea was extended by Blichfeldt~\cite{B2} and van der Corput~\cite{C} to exhibit more lattice points in larger convex bodies. It leads to the following estimate that we state partly for motivation, partly as a technical ingredient for our investigations. For a modern exposition of the quoted results, see \cite[Ch.~2, \S 5.1 \& \S 7.2]{GL}.

\begin{theorem}[Minkowski~\cite{M}, Blichfeldt~\cite{B2}, van der Corput~\cite{C}]\label{thm1} Let $\mn\subset\mo$ be a nonzero ideal, and let $\cB\subset k\otimes_\QQ\RR$ be a convex body symmetric about the origin. Then
\[|\mn\cap\cB|\geq\frac{\vol(\cB)}{2^d|\Delta|^{1/2}[\mo:\mn]}.\]
\end{theorem}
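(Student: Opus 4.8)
The plan is to reduce the statement to the classical lattice-point theorem of Blichfeldt and van der Corput, and then to prove the latter by an averaging argument on a torus.

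First I would recast the problem in the language of the geometry of numbers. Via the embedding $k\hookrightarrow k\otimes_\QQ\RR$, the space $k\otimes_\QQ\RR$ is a Euclidean space of dimension $d$ in which, as recalled in the introduction, $\mo$ is a full-rank lattice of covolume $|\Delta|^{1/2}$; hence its finite-index subgroup $\mn$ is a full-rank lattice of covolume $V:=[\mo:\mn]\,|\Delta|^{1/2}$. The inequality to be proved becomes
\[
|\mn\cap\cB|\;\geq\;\frac{\vol(\cB)}{2^d\,V}.
\]
If $\cB=\emptyset$ both sides vanish, so we may assume $\cB\neq\emptyset$; then, being convex and symmetric, $\cB$ contains the origin, and, being a convex body, it is bounded and measurable, so $|\mn\cap\cB|$ is finite. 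Thus it suffices to prove the following general fact: if $\Lambda\subset\RR^d$ is a full-rank lattice of covolume $V$ and $\cB\subset\RR^d$ is a bounded measurable set that is symmetric about the origin and convex, then $|\Lambda\cap\cB|\geq\vol(\cB)/(2^dV)$.

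For this I would run the van der Corput averaging argument. Let $\pi\colon\RR^d\to\RR^d/\Lambda$ be the quotient onto the torus of total volume $V$, and work with the scaled body $\tfrac12\cB$. Define $f\colon\RR^d/\Lambda\to\ZZ_{\geq0}$ by $f(y)=\#\bigl(\pi^{-1}(y)\cap\tfrac12\cB\bigr)$. Unfolding over a fundamental domain gives
\[
\int_{\RR^d/\Lambda}f(y)\,dy\;=\;\vol\!\bigl(\tfrac12\cB\bigr)\;=\;2^{-d}\vol(\cB).
\]
On the other hand $f(y)\leq|\Lambda\cap\cB|$ for every $y$: if $x_1,\dots,x_m$ are distinct points of $\tfrac12\cB$ with $\pi(x_1)=\dots=\pi(x_m)$, then the differences $x_i-x_1$ with $1\leq i\leq m$ are $m$ distinct elements of $\Lambda$ lying in $\tfrac12\cB-\tfrac12\cB\subseteq\cB$, the inclusion using the symmetry and convexity of $\cB$; hence $|\Lambda\cap\cB|\geq m$. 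Combining the two displays yields $2^{-d}\vol(\cB)\leq|\Lambda\cap\cB|\cdot V$, which is exactly the claim.

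I do not anticipate a real obstacle: the argument is short and classical, and the only points needing a word of care are the covolume normalization $\vol(\mo)=|\Delta|^{1/2}$ (the standard discriminant computation, or a reference such as \cite[Ch.~2]{GL}), the measurability and finiteness statements that hold automatically for a convex body, and the inclusion $\tfrac12\cB-\tfrac12\cB\subseteq\cB$, which is exactly where the hypothesis that $\cB$ is symmetric about the origin is used. As the theorem is quoted from the literature, I would keep the exposition correspondingly brief, pointing to \cite[Ch.~2, \S 5.1 \& \S 7.2]{GL} and merely highlighting the averaging step above.
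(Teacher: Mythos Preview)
The paper does not supply its own proof of this theorem: it is quoted as a classical result and the reader is referred to \cite[Ch.~2, \S 5.1 \& \S 7.2]{GL} for a modern exposition. Your argument is correct and is exactly the standard Blichfeldt--van der Corput averaging proof one finds in that reference, so the proposal is entirely in line with what the paper intends.
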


Blichfeldt~\cite{B2} also established an upper bound of similar quality in the case when $\mn\cap\cB$ contains $d$ linearly independent vectors.

\begin{theorem}[Blichfeldt~\cite{B2}]\label{thm1b} Let $\mn\subset\mo$ be a nonzero ideal, and let $\cB\subset k\otimes_\QQ\RR$ be a convex body symmetric about the origin. Assume that $\mn\cap\cB$ contains $d$ linearly independent vectors. Then
\[|\mn\cap\cB|\leq\frac{(d+1)!\vol(\cB)}{|\Delta|^{1/2}[\mo:\mn]}.\]
\end{theorem}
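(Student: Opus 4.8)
The plan is to deduce the estimate from Blichfeldt's classical packing bound for arbitrary convex bodies, using the symmetry of $\cB$ only to bridge the small gap between the two inequalities. First I would dispose of trivial cases: $\cB$ may be assumed bounded, and $\vol(\cB)>0$ since $\cB$ contains $d$ linearly independent vectors, hence a nondegenerate $d$-simplex. Regard $\mn$ as a lattice $\Lambda$ in the $d$-dimensional space $k\otimes_\QQ\RR$; its covolume is $\det\Lambda=|\Delta|^{1/2}[\mo:\mn]$, the factor $[\mo:\mn]$ accounting for the index and $|\Delta|^{1/2}$ being the covolume of $\mo$. Set $N=|\mn\cap\cB|$ and fix linearly independent $v_1,\dots,v_d\in\mn\cap\cB$. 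Because $\cB$ is symmetric and convex, $0\in\cB$; thus $\cB$ contains the $d+1$ affinely independent lattice points $0,v_1,\dots,v_d$.

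Next I would invoke the theorem of Blichfeldt that a bounded convex body containing $m$ points of a lattice $\Lambda\subset\RR^d$, among which $d+1$ are affinely independent, has volume at least $\frac{m-d}{d!}\det\Lambda$ (see \cite[Ch.~2, \S 7.2]{GL}). With $m=N$ this gives $\vol(\cB)\geq\frac{N-d}{d!}\det\Lambda$, that is,
\[ N\ \leq\ d+\frac{d!\,\vol(\cB)}{\det\Lambda}. \]
To finish, I would bound $\vol(\cB)$ from below: the simplex on $0,v_1,\dots,v_d$ lies in $\cB$ and has volume $\frac{1}{d!}|\det(v_1,\dots,v_d)|$, and since $v_1,\dots,v_d$ generate a finite-index sublattice of $\Lambda$ this is at least $\frac{1}{d!}\det\Lambda$; hence $\frac{d!\,\vol(\cB)}{\det\Lambda}\geq 1$. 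Consequently $d\leq d\cdot\frac{d!\,\vol(\cB)}{\det\Lambda}$, and substituting into the last display,
\[ N\ \leq\ (d+1)\cdot\frac{d!\,\vol(\cB)}{\det\Lambda}\ =\ \frac{(d+1)!\,\vol(\cB)}{|\Delta|^{1/2}[\mo:\mn]}, \]
as claimed.

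The routine parts — the covolume computation and the simplex volume bound — are immediate; the real content is the cited Blichfeldt estimate, and this is the step I expect to be the main obstacle if one wants a self-contained treatment. It can be established along the lines of \cite{B2} by an extremal argument on the convex hull of the $m$ lattice points (peeling off, one at a time, a lattice vertex while controlling the lost volume), but for our purposes it is cleanest to quote \cite{GL}.
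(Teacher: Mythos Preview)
Your proof is correct and follows essentially the same route as the paper: both reduce Theorem~\ref{thm1b} to Blichfeldt's general inequality $|\Lambda\cap\cC|\leq d!\,\vol(\cC)/\det\Lambda+d$ (the paper's Theorem~\ref{thm5}) together with the trivial lower bound $\vol(\cC)\geq\det\Lambda/d!$, after identifying $\det\Lambda=|\Delta|^{1/2}[\mo:\mn]$. The only difference is that the paper supplies a self-contained proof of the Blichfeldt inequality via a full triangulation of $\Lambda\cap\cC$ and the bound $|\cT|\geq|\Lambda\cap\cC|-d$, whereas you quote the inequality from \cite{GL} and sketch a vertex-peeling alternative.
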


In fact Blichfeldt proved a more general result, namely Theorem~\ref{thm5} in Section~\ref{sect3}. The original source \cite{B2} is an account of an AMS Sectional Meeting held in 1920 (written by B.~A.~Bernstein), so it does not contain any proof. What is worse, we could only find sketches of the proof in the literature. Hence we include a detailed proof in Section~\ref{sect3}, without claiming any originality.

Our principal goal in this paper is to provide an upper bound for $|\mn\cap\cB|$ in the complementary case when $\mn\cap\cB$ does not contain $d$ linearly independent vectors. More precisely, with certain arithmetic applications in mind, we restrict ourselves to the special convex bodies considered by Minkowski~\cite{M} in his seminal work. They are the archimedean analogues of ideal lattices, and they are defined as follows. As before, let $k$ be a number field of degree $d>1$. Let $\Sigma:=\Hom(k,\ov{\QQ})$, and let $K$ be the compositum of the fields $\sigma(k)$ for $\sigma\in\Sigma$. Then $K/\QQ$ is a finite Galois extension whose Galois group $G:=\Gal(K/\QQ)$ acts transitively and faithfully on $\Sigma$. In this way, $G$ is a transitive permutation group of degree $d$. Fixing an embedding $\ov{\QQ}\hookrightarrow\CC$, we can think of the elements of $\Sigma$ as the embeddings $\sigma:k\hookrightarrow\CC$, and we can identify $k\otimes_\QQ\RR$ with the set of column vectors $(z_\sigma)\in\CC^\Sigma$ satisfying $z_{\ov\sigma}=\ov{z_\sigma}$ for all $\sigma\in\Sigma$. See \cite[Ch.~I, \S 5]{N} for more details. Let $(B_\sigma)$ be a collection of positive numbers such that $B_{\ov\sigma}=B_\sigma$ for all $\sigma\in\Sigma$. We shall focus on convex bodies of the form
\begin{equation}\label{eq1}
\cB:=\left\{(z_\sigma)\in\CC^\Sigma:\text{$z_{\ov\sigma}=\ov{z_\sigma}$ and $|z_\sigma|\leq B_\sigma$ for all $\sigma\in\Sigma$}\right\},
\end{equation}
and we note for later reference that
\begin{equation}\label{eq19}
\vol(\cB)\asymp_d\prod_{\sigma\in\Sigma}B_\sigma.
\end{equation}
Here and later, the symbols $\ll_d$, $\gg_d$, $\asymp_d$ have their usual meaning in analytic number theory: $X\ll_d Y$ (resp. $Y\gg_d X$) means that $|X|\leq CY$ holds for an absolute constant $C>0$ depending only on $d$, while $X\asymp_d Y$ abbreviates $X\ll_d Y\ll_d X$.

\begin{theorem}\label{thm3b} Let $\mn\subset\mo$ be a nonzero ideal, and let $\cB\subset k\otimes_\QQ\RR$ be a convex body of the form \eqref{eq1}. Let $m$ be the maximal number of linearly independent lattice vectors contained in $\mn\cap\cB$. If $m<d$, then
\begin{equation}\label{eq2b}|\mn\cap\cB|\ll_d|\Delta|^{\min\left(\frac{1}{2},\frac{m}{2d-2m}\right)}.\end{equation}
Further, if $m<d$ and $G$ is $2$-homogeneous\footnote{that is, $G$ acts transitively on the $2$-element subsets of $\Sigma$}, then
\begin{equation}\label{eq3b}|\mn\cap\cB|\ll_d|\Delta|^{\frac{m}{2d-2}}.\end{equation}
\end{theorem}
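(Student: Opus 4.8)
The plan is to reduce, via a Blichfeldt-type argument, to estimating a single ratio of a convex-body volume to a lattice covolume, and then to extract the power of $|\Delta|$ from ramification theory, with group theory entering in the choice of coordinates and (when $G$ is $2$-homogeneous) in a sharper bookkeeping of the ramification.

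First I would set $W:=\operatorname{span}_\QQ(\mn\cap\cB)\subseteq k$, so $\dim_\QQ W=m$, and let $V:=W\otimes_\QQ\RR\subseteq k\otimes_\QQ\RR$ and $\Lambda:=\mo\cap W$, a lattice of rank $m$ inside $V$. Since $\mn\cap\cB\subseteq\Lambda$ lies in $\cB\cap V$ and spans $V$, Blichfeldt's general estimate (Theorem~\ref{thm5}), applied in the $m$-dimensional space $V$ to the lattice $\Lambda$ and the symmetric convex body $\cB\cap V$, gives
\[
|\mn\cap\cB|\;\le\;|\Lambda\cap\cB|\;\ll_d\;\frac{\vol_m(\cB\cap V)}{\operatorname{covol}(\Lambda)}.
\]
So it suffices to bound the right-hand side by $|\Delta|^{\min(1/2,\,m/(2d-2m))}$, and by $|\Delta|^{m/(2d-2)}$ when $G$ is $2$-homogeneous.

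I would estimate numerator and denominator through one and the same coordinate projection. Choose a conjugation-stable subset $S\subseteq\Sigma$ whose coordinates span an $m$-dimensional real subspace onto which the projection $\pi_S$ maps $V$ isomorphically, and among all such $S$ take one making $\prod_{\sigma\in S}B_\sigma$ as small as possible. (Admissible $S$ exist automatically; the role of the transitive action of $G=\Gal(K/\QQ)$ on $\Sigma$ --- strengthened to an action on pairs when $G$ is $2$-homogeneous --- is to control how the $d-m$ omitted coordinates are expressed $K$-linearly in terms of those in $S$, and thereby to guarantee that $S$ can be chosen to carry the thin directions of $\cB$.) Writing $J\in(0,1]$ for the Jacobian of $\pi_S|_V$, we have $\vol_m(\cB\cap V)\le J^{-1}\vol_m(\pi_S\cB)\asymp_d J^{-1}\prod_{\sigma\in S}B_\sigma$ by \eqref{eq19}, while $\operatorname{covol}(\Lambda)=J^{-1}\operatorname{covol}(\pi_S\Lambda)$; the factor $J^{-1}$ cancels, leaving
\[
|\mn\cap\cB|\;\ll_d\;\frac{\prod_{\sigma\in S}B_\sigma}{\operatorname{covol}(\pi_S\Lambda)}.
\]

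It then remains to prove two estimates, and this is the step I expect to be the main obstacle. The first is a lower bound $\operatorname{covol}(\pi_S\Lambda)\gg_d|\Delta|^{-c}$: the lattice $\pi_S\Lambda$ agrees, up to index bounded in terms of $d$, with the image of $\mo$ in the chart furnished by the $S$-coordinates, and the denominators occurring in that chart divide a fixed power of the different $\mathfrak d_{K/\QQ}$ of the Galois closure; bounding its contribution in terms of $|\Delta|$ needs the ramification theory, separating the tame part $\Dt$ --- controlled directly through the local different exponents together with $[K:\QQ]\le d!$ --- from the wild part, which intervenes at only boundedly many primes. The second is an upper bound on $\prod_{\sigma\in S}B_\sigma$ using the hypothesis $m<d$: applying Minkowski's second theorem to $(\cB,\mn)$ and using $|N(x)|\ge[\mo:\mn]$ for nonzero $x\in\mn$ to keep the smallest successive minimum from being too small, together with $\lambda_{m+1}>1$ to keep the largest from being too large, and combining this with the thinness of $\cB$ in the $d-m$ directions complementary to $S$, one obtains $\prod_{\sigma\in S}B_\sigma\ll_d\operatorname{covol}(\pi_S\Lambda)\cdot|\Delta|^{\min(1/2,\,m/(2d-2m))}$ --- with $2d-2$ in place of $2d-2m$ when the $2$-homogeneity of $G$ forces the more balanced configuration. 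Substituting both estimates into the last display yields the theorem. The genuinely delicate point, to which the group theory and the ramification theory are devoted, is that the set $S$ must simultaneously make $\pi_S|_V$ an isomorphism, carry the small $B_\sigma$, and be such that $\operatorname{covol}(\pi_S\Lambda)$ is bounded below by the required power of $|\Delta|$ --- reconciling these three demands is the crux.
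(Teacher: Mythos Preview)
Your outline has a genuine gap at its core: you try to find a \emph{single} coordinate set $S\subset\Sigma$ that is simultaneously conjugation-stable, makes $\pi_S|_V$ an isomorphism, carries the small $B_\sigma$, and has $\operatorname{covol}(\pi_S\Lambda)$ bounded below by the right power of $|\Delta|$. No such $S$ need exist. Already the conjugation-stability fails in general (if $k$ is totally complex and $m$ is odd, there is no conjugation-stable $m$-subset of $\Sigma$), and even dropping that, there is no mechanism forcing the subset on which $\pi_S|_V$ is injective to coincide with the subset where the $B_\sigma$ are small. Your parenthetical remark that the transitive action of $G$ ``guarantees that $S$ can be chosen to carry the thin directions of $\cB$'' is precisely the unproved assertion; transitivity of $G$ on $\Sigma$ says nothing about the particular configuration of $(B_\sigma)$ relative to $V$.

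The paper's proof avoids this trap by \emph{not} choosing a single good $S$. Instead, it fixes any $m$-subset $S$ with $\det(\sigma(x))_{x\in X}^{\sigma\in S}\neq 0$, applies the Blichfeldt bound for \emph{every} translate $gS$ with $g\in G$, and takes the geometric mean over $G$. Transitivity then gives $\prod_{g\in G}\prod_{\sigma\in gS}B_\sigma=\bigl(\prod_\sigma B_\sigma\bigr)^{|G|m/d}\asymp_d\vol(\cB)^{|G|m/d}$ automatically, with no need to locate the thin directions. The denominator is handled by Theorem~\ref{thm3}, which bounds the \emph{product} $\prod_{g\in G}|\det(\sigma(x))^{\sigma\in gS}_{x\in X}|$ from below by a power of $\Dt\,[\mo:\mn]$ (the $2$-homogeneous improvement enters here, via the averaging identity for pairs). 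This yields $|\mn\cap\cB|\ll_d\vol(\cB)^{m/d}\big/\bigl(|\Delta|^{\max(0,m/d-1/2)}[\mo:\mn]^{m/d}\bigr)$. The final step, also missing from your sketch, is to feed Theorem~\ref{thm1} (the Minkowski--Blichfeldt--van~der~Corput lower bound $|\mn\cap\cB|\gg_d\vol(\cB)/(|\Delta|^{1/2}[\mo:\mn])$) back in to eliminate $\vol(\cB)$ and $[\mo:\mn]$, and then solve for $|\mn\cap\cB|$. Your ``two estimates'' are thus replaced by one arithmetic input (Theorem~\ref{thm3}) plus one bootstrap (Theorem~\ref{thm1}), and the group theory acts through averaging rather than through choosing $S$.
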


Theorems~\ref{thm1b} and \ref{thm3b} yield a practical estimate for the number of elements of $k$ which are bounded in every archimedean and non-archimedean valuation of $k$.

\begin{corollary}\label{cor1b} Let $\mn\subset\mo$ be a nonzero ideal, and let $\cB\subset k\otimes_\QQ\RR$ be a convex body of the form \eqref{eq1}. Then
\begin{equation}\label{eq4b}
|\mn\cap\cB|\ll_d|\Delta|^{1/2}+\frac{\vol(\cB)}{|\Delta|^{1/2}[\mo:\mn]}.
\end{equation}
\end{corollary}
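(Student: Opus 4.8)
The plan is to split on the integer that already appears in Theorem~\ref{thm3b}. Let $m$ be the maximal number of linearly independent lattice vectors contained in $\mn\cap\cB$, so that $0\leq m\leq d$. The two regimes $m=d$ and $m<d$ are exhaustive, and I would handle them by Theorems~\ref{thm1b} and~\ref{thm3b} respectively; each regime contributes exactly one of the two terms on the right-hand side of \eqref{eq4b}.

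In the regime $m=d$, the set $\mn\cap\cB$ contains $d$ linearly independent vectors, so Theorem~\ref{thm1b} applies verbatim and gives
\[|\mn\cap\cB|\leq\frac{(d+1)!\,\vol(\cB)}{|\Delta|^{1/2}[\mo:\mn]}\ll_d\frac{\vol(\cB)}{|\Delta|^{1/2}[\mo:\mn]},\]
with implied constant $(d+1)!$; this is at most the right-hand side of \eqref{eq4b}. In the regime $m<d$, Theorem~\ref{thm3b} applies and yields $|\mn\cap\cB|\ll_d|\Delta|^{\min(1/2,\,m/(2d-2m))}$. Since $d>1$ forces $|\Delta|>1$, and since the exponent $\min(1/2,\,m/(2d-2m))$ never exceeds $1/2$, this bound is $\ll_d|\Delta|^{1/2}$, again at most the right-hand side of \eqref{eq4b}.

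Combining the two cases, $|\mn\cap\cB|$ is in every case dominated by one of the two summands in \eqref{eq4b}, hence by their sum, which is precisely the asserted bound. I expect no genuine obstacle here: the corollary is a purely formal consequence of the two theorems, and one does not even need the refined exponent of \eqref{eq2b} — the crude estimate $|\mn\cap\cB|\ll_d|\Delta|^{1/2}$ in the case $m<d$ already suffices. All of the substance sits in the proof of Theorem~\ref{thm3b}; the only things to verify in the corollary itself are that the two regimes are complementary and that $|\Delta|\geq 1$, both of which are immediate.
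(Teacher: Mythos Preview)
Your proof is correct and follows exactly the same approach as the paper: split according to whether $\mn\cap\cB$ contains $d$ linearly independent vectors, and invoke Theorem~\ref{thm1b} in the full-rank case and Theorem~\ref{thm3b} in the deficient-rank case. The paper's own proof is essentially a two-sentence version of what you wrote; your added remarks that $|\Delta|>1$ and that the exponent in \eqref{eq2b} never exceeds $1/2$ make explicit what the paper leaves implicit.
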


By combining Theorems~\ref{thm1} and \ref{thm3b}, we see that if the volume of our convex body is sufficiently large compared to the covolume of our ideal lattice, then the intersection contains several linearly independent lattice vectors.

\begin{corollary}\label{cor1} Let $\mn\subset\mo$ be a nonzero ideal, and let $\cB\subset k\otimes_\QQ\RR$ be a convex body of the form \eqref{eq1}. Let $m$ be the maximal number of linearly independent lattice vectors contained in $\mn\cap\cB$. If $m<d$, then
\begin{equation}\label{eq2}\vol(\cB)\ll_d|\Delta|^{\min\left(1,\frac{d}{2d-2m}\right)}[\mo:\mn].\end{equation}
Further, if $m<d$ and $G$ is $2$-homogeneous, then
\begin{equation}\label{eq3}\vol(\cB)\ll_d|\Delta|^{\frac{d-1+m}{2d-2}}[\mo:\mn].\end{equation}
\end{corollary}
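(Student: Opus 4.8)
The plan is to combine the lower bound of Theorem~\ref{thm1} with the upper bounds of Theorem~\ref{thm3b}; no new ideas are needed. Since the convex body $\cB$ in \eqref{eq1} is manifestly symmetric about the origin (the conditions $z_{\ov\sigma}=\ov{z_\sigma}$ and $|z_\sigma|\leq B_\sigma$ are preserved by $(z_\sigma)\mapsto(-z_\sigma)$), Theorem~\ref{thm1} applies and can be rearranged as
\[
\vol(\cB)\leq 2^d|\Delta|^{1/2}[\mo:\mn]\cdot|\mn\cap\cB|.
\]
The hypothesis $m<d$ is precisely the hypothesis of Theorem~\ref{thm3b}, so I would plug \eqref{eq2b} into this inequality to get
\[
\vol(\cB)\ll_d|\Delta|^{\frac12+\min\left(\frac12,\,\frac{m}{2d-2m}\right)}[\mo:\mn],
\]
and likewise, in the $2$-homogeneous case, plug \eqref{eq3b} into it to get $\vol(\cB)\ll_d|\Delta|^{\frac12+\frac{m}{2d-2}}[\mo:\mn]$.

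It then remains only to simplify the exponents. Using $\frac12+\frac{m}{2d-2m}=\frac{(d-m)+m}{2d-2m}=\frac{d}{2d-2m}$, I would rewrite $\frac12+\min\!\left(\frac12,\frac{m}{2d-2m}\right)=\min\!\left(1,\frac{d}{2d-2m}\right)$, which is the exponent in \eqref{eq2}. Similarly $\frac12+\frac{m}{2d-2}=\frac{(d-1)+m}{2d-2}$ gives the exponent in \eqref{eq3}.

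I do not expect any genuine obstacle: the corollary is a formal consequence of the two theorems, and the only point requiring a little care is the bookkeeping that the factor $|\Delta|^{1/2}$ (the covolume of the ideal lattice) absorbs the $\min$ in \eqref{eq2b} into a single clean $\min$. As a sanity check one can note that the edge case $m=0$ forces $\mn\cap\cB=\{0\}$, and both \eqref{eq2} and \eqref{eq3} then reduce to $\vol(\cB)\ll_d|\Delta|^{1/2}[\mo:\mn]$, which already follows from Theorem~\ref{thm1} together with $|\mn\cap\cB|\geq 1$.
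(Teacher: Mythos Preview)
Your proof is correct and follows exactly the paper's approach: the paper's own proof is the single sentence ``In the light of Theorem~\ref{thm1}, the bound \eqref{eq2} follows from \eqref{eq2b}, while the bound \eqref{eq3} follows from \eqref{eq3b},'' and your argument is precisely the expanded version of this, with the exponent bookkeeping carried out.
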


If $m=0$, then \eqref{eq2b} and \eqref{eq3b} are trivial, while \eqref{eq2} and \eqref{eq3} boil down to the Minkowski bound
$\vol(\cB)\ll_d|\Delta|^{1/2}[\mo:\mn]$. If $m=1$ or $m=d-1$, then \eqref{eq2b} and \eqref{eq3b} (resp. \eqref{eq2} and \eqref{eq3}) are identical. For $2\leq m\leq d-2$, the bound \eqref{eq3b} is stronger than \eqref{eq2b} (resp. \eqref{eq3} is stronger than \eqref{eq2}), but its scope is restricted by the assumption that $G$ is $2$-homogeneous. The list of finite $2$-homogeneous groups is known by the work of many people, in particular by the classification of finite simple groups. For further details and references, see \cite[Prop.~3.1]{K}, \cite[Th.~5.3]{C2}, \cite[p.~198]{H}.

\begin{corollary}\label{cor2} Let $\mn\subset\mo$ be a nonzero ideal, and let $\cB\subset k\otimes_\QQ\RR$ be a convex body of the form \eqref{eq1}.
If $\cB$ does not contain a lattice basis of $\mn$, then $\vol(\cB)\ll_d|\Delta|[\mo:\mn]$.
\end{corollary}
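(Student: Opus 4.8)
The plan is to derive Corollary~\ref{cor2} from Corollary~\ref{cor1} by a single rescaling, using the classical fact that a lattice always admits a $\ZZ$-basis whose members lie in a bounded dilate of any prescribed $0$-symmetric convex body, the dilation factor being controlled by the successive minima. Precisely, let $\lambda_1\le\dots\le\lambda_d$ denote the successive minima of the lattice $\mn$ with respect to $\cB$. A standard result in the geometry of numbers (see \cite[Ch.~2]{GL}) furnishes a $\ZZ$-basis $b_1,\dots,b_d$ of $\mn$ with $b_i\in\tfrac{i+1}{2}\lambda_i\,\cB$ for each $i$; since $\lambda_i\le\lambda_d$, all of these vectors in fact lie in $\tfrac{d+1}{2}\lambda_d\,\cB$. (Any bound of the shape $b_i\in c_d\lambda_d\cB$ with $c_d$ depending only on $d$ would serve equally well below.)

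Now assume that $\cB$ contains no $\ZZ$-basis of $\mn$. Then the basis $b_1,\dots,b_d$ just exhibited cannot be contained in $\cB$, which forces $\tfrac{d+1}{2}\lambda_d>1$. Put $\cB':=\tfrac{2}{d+1}\cB$; this is again a convex body of the form \eqref{eq1}, namely with $B'_\sigma=\tfrac{2}{d+1}B_\sigma$ for all $\sigma\in\Sigma$. Scaling $\cB$ by $\tfrac{2}{d+1}$ multiplies every successive minimum by $\tfrac{d+1}{2}$, so the $d$-th successive minimum of $\mn$ relative to $\cB'$ equals $\tfrac{d+1}{2}\lambda_d>1$; consequently $\cB'$ does not contain $d$ linearly independent lattice vectors of $\mn$. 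Thus Corollary~\ref{cor1}, applied to $\cB'$ (whose associated integer $m$ is now $<d$), gives
\[\vol(\cB')\ll_d|\Delta|^{\min\left(1,\frac{d}{2d-2m}\right)}[\mo:\mn]\le|\Delta|\,[\mo:\mn].\]
Since $\vol(\cB)=\bigl(\tfrac{d+1}{2}\bigr)^d\vol(\cB')$, we conclude $\vol(\cB)\ll_d|\Delta|\,[\mo:\mn]$, as desired.

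The only ingredient that is not a formal manipulation is the successive-minima basis lemma quoted in the first paragraph, and that is the step I would take most care over -- in particular over having it in the form ``the $i$-th basis vector may be chosen inside a $d$-dependent multiple of $\lambda_i\cB$'', which is exactly what lets the rescaling $\cB\mapsto\cB'$ reduce the problem to the case $m<d$ already settled by Corollary~\ref{cor1}. Note that only the general bound \eqref{eq2} is invoked, so no $2$-homogeneity of $G$ is needed, and the exponent $1$ on $|\Delta|$ is the worst case of Corollary~\ref{cor1}, already occurring once $m\ge d/2$.
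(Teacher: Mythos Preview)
Your proof is correct and follows essentially the same route as the paper: scale $\cB$ down by a $d$-dependent factor so that the scaled body contains fewer than $d$ independent lattice points, then apply Corollary~\ref{cor1}. The paper invokes this directly as ``Mahler's observation'' (citing the same source \cite[Ch.~2, \S 10.2]{GL}) and uses the factor $\tfrac{1}{d}$ rather than your $\tfrac{2}{d+1}$, but the argument is otherwise identical.
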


Interestingly, when $k$ is totally real, the conclusion of Corollary~\ref{cor2} also follows from a celebrated result of McMullen~\cite[Th.~4.1]{M3} proved by topological arguments. In another direction, when the radii $B_\sigma$ are equal, the conclusion of Corollary~\ref{cor2} says that the last successive minimum\footnote{we understand successive minima with respect to the closed Euclidean unit ball centered at the origin} of $\mn$ is $\ll_d|\Delta|^{1/d}[\mo:\mn]^{1/d}$. For $\mn=\mo$, this bound was deduced earlier by Bhargava et al.~\cite[Th.~1.6]{B} with a more direct approach. We will return to these connections in Section~\ref{sect5}. In fact we can control, to some extent, all successive minima of ideal lattices.

\begin{theorem}\label{thm4b} Let $\lambda_1\leq\dotsb\leq\lambda_d$ be the successive minima of a nonzero ideal $\mn\subset\mo$ embedded as a lattice in $k\otimes_\QQ\RR$. Then for all $m\in\{1,\dotsc,d-1\}$ we have
\begin{align}\label{eq21}
\lambda_1\cdots\lambda_m&\gg_d|\Delta|^{\max\left(0,\frac{m}{d}-\frac{1}{2}\right)}[\mo:\mn]^{\frac{m}{d}};\\
\label{eq20}
\lambda_{m+1}\lambda_{m+2}\cdots\lambda_d&\ll_d|\Delta|^{\min\left(\frac{1}{2},1-\frac{m}{d}\right)}[\mo:\mn]^{1-\frac{m}{d}}.
\end{align}
If $G$ is $2$-homogeneous, then the exponents of $|\Delta|$ in \eqref{eq21} and \eqref{eq20} can be improved to
$\frac{m(m-1)}{2d(d-1)}$ and $\frac{(d-m)(d+m-1)}{2d(d-1)}$, respectively.
\end{theorem}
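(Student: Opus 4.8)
The plan is to reduce \eqref{eq21} and \eqref{eq20} to one another (so that it suffices to prove the upper bounds \eqref{eq20}), to settle the general case by elementary geometry of numbers together with Corollary~\ref{cor2}, and to extract the $2$-homogeneous refinement from the stronger clause of Theorem~\ref{thm3b}. Under the Minkowski embedding $\mn$ becomes a lattice of covolume $|\Delta|^{1/2}[\mo:\mn]$ in $k\otimes_\QQ\RR\cong\RR^d$, so Minkowski's second theorem gives $\lambda_1\cdots\lambda_d\asymp_d|\Delta|^{1/2}[\mo:\mn]$; since the exponents of $|\Delta|$ in \eqref{eq21} and \eqref{eq20} are complementary, namely $1/2-\max(0,m/d-1/2)=\min(1/2,1-m/d)$ and $1/2-\frac{m(m-1)}{2d(d-1)}=\frac{(d-m)(d+m-1)}{2d(d-1)}$, the identity $\lambda_1\cdots\lambda_m=(\lambda_1\cdots\lambda_d)/(\lambda_{m+1}\cdots\lambda_d)$ does indeed reduce everything to \eqref{eq20}. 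I would also record the trivial bound $\lambda_1\gg_d[\mo:\mn]^{1/d}$: for nonzero $\alpha\in\mn$ one has $|N(\alpha)|=[\mo:(\alpha)]\geq[\mo:\mn]$, hence $\|\alpha\|^2=\sum_\sigma|\sigma(\alpha)|^2\geq d|N(\alpha)|^{2/d}\geq d[\mo:\mn]^{2/d}$ by the AM--GM inequality, so $\lambda_1\cdots\lambda_m\geq\lambda_1^m\gg_d[\mo:\mn]^{m/d}$.

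For the general case of \eqref{eq20}, dividing $\lambda_1\cdots\lambda_d\asymp_d|\Delta|^{1/2}[\mo:\mn]$ by $\lambda_1\cdots\lambda_m\gg_d[\mo:\mn]^{m/d}$ gives $\lambda_{m+1}\cdots\lambda_d\ll_d|\Delta|^{1/2}[\mo:\mn]^{1-m/d}$. On the other hand, if $R$ is smaller than the largest successive minimum of $\mn$ relative to a polydisc $\cB$ (a body of the form \eqref{eq1} with all $B_\sigma$ equal), then $R\cB$ contains no $d$ independent lattice vectors, in particular no lattice basis of $\mn$, so Corollary~\ref{cor2} together with \eqref{eq19} gives $R^d\ll_d\vol(R\cB)\ll_d|\Delta|[\mo:\mn]$; letting $R$ tend to that last minimum and comparing $\cB$ with the Euclidean unit ball yields $\lambda_d\ll_d(|\Delta|[\mo:\mn])^{1/d}$, whence $\lambda_{m+1}\cdots\lambda_d\leq\lambda_d^{\,d-m}\ll_d(|\Delta|[\mo:\mn])^{1-m/d}$. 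Taking the smaller of the two bounds proves \eqref{eq20} with exponent $\min(1/2,1-m/d)$, hence \eqref{eq21} in the general case.

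The $2$-homogeneous refinement should come from using Theorem~\ref{thm3b} at \emph{every} rank. Passing to the successive minima $\mu_1\leq\dots\leq\mu_d$ of $\mn$ relative to the polydisc $\cB$ (comparable to $\lambda_1,\dots,\lambda_d$ up to constants depending on $d$), the dilate $R\cB$ contains exactly $k$ independent lattice vectors when $\mu_k\leq R<\mu_{k+1}$, all of them lying in the $k$-dimensional span of the first $k$ minimal vectors, a sublattice of covolume $\asymp_d\lambda_1\cdots\lambda_k$; letting $R\to\mu_{k+1}^-$ gives $|\mn\cap R\cB|\asymp_d\lambda_{k+1}^k/(\lambda_1\cdots\lambda_k)$, which Theorem~\ref{thm3b} bounds by $|\Delta|^{k/(2d-2)}$, so that
\[\lambda_{k+1}^{\,k}\ll_d|\Delta|^{k/(2d-2)}\,\lambda_1\cdots\lambda_k\qquad(1\leq k\leq d-1),\]
and the analogous argument applied to the ideal lattice dual to $\mn$ (or a transference theorem) yields the complementary bounds $\lambda_{\ell+1}\cdots\lambda_d\ll_d|\Delta|^{(d-\ell)/(2d-2)}\lambda_\ell^{\,d-\ell}$. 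Feeding these inequalities, together with $\lambda_1\cdots\lambda_d\asymp_d|\Delta|^{1/2}[\mo:\mn]$, the bound $\lambda_1\gg_d[\mo:\mn]^{1/d}$, and the multiplicative structure of $\mn$ (if $v\in\mn$ and $\alpha\in\mo$ are nearly minimal in $\mn$ and $\mo$ respectively, then $v,\alpha v,\dots,\alpha^{d-1}v$ are independent elements of $\mn$ with $\|\alpha^jv\|\leq\|v\|\,\|\alpha\|^j$, which bounds the larger minima of $\mn$ by the smaller ones), into a finite optimization over $\log\lambda_1,\dots,\log\lambda_d$ should yield \eqref{eq20} with exponent $\frac{(d-m)(d+m-1)}{2d(d-1)}$; the extremal profile $\lambda_j\asymp_d|\Delta|^{(j-1)/(d(d-1))}[\mo:\mn]^{1/d}$ shows this exponent is sharp for the method.

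The main obstacle is precisely this last step. The description of $R\cB$ as containing ``exactly $k$ independent vectors'' has to be handled with care when the successive minima cluster, which is why one works with the minima relative to $\cB$ itself and groups nearly equal ones; but the genuine difficulty is that the purely geometric inequalities above do not by themselves force the optimum all the way down to $\frac{(d-m)(d+m-1)}{2d(d-1)}$, so one really must bring in the arithmetic input from the multiplicative structure, and organizing all of these constraints so that they interlock to give exactly this exponent — rather than some weaker bound — is where the real work lies.
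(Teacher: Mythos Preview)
Your argument for the general case of \eqref{eq20} (and hence \eqref{eq21}) is correct, and takes a somewhat different path from the paper: you combine the trivial norm bound $\lambda_1\gg_d[\mo:\mn]^{1/d}$ with the single endpoint bound $\lambda_d\ll_d(|\Delta|[\mo:\mn])^{1/d}$ coming from Corollary~\ref{cor2}, and interpolate crudely via $\lambda_{m+1}\cdots\lambda_d\leq\lambda_d^{\,d-m}$. The paper instead proves \eqref{eq21} for every $m$ directly by a covolume argument, and then deduces \eqref{eq20} from \eqref{eq28}. Your route is pleasantly elementary for the general exponent $\min(1/2,1-m/d)$, but it is exactly this elementary interpolation that fails to reach the $2$-homogeneous exponent.

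For the $2$-homogeneous refinement you have a genuine gap, and you essentially admit it. The inequalities $\lambda_{k+1}^{\,k}\ll_d|\Delta|^{k/(2d-2)}\lambda_1\cdots\lambda_k$ that you extract from Theorem~\ref{thm3b}, together with \eqref{eq28} and $\lambda_1\gg_d[\mo:\mn]^{1/d}$, do \emph{not} force the exponent $\frac{(d-m)(d+m-1)}{2d(d-1)}$ in \eqref{eq20}. For example, with $d=4$ and $m=2$ these constraints allow $\lambda_3\lambda_4$ as large as $|\Delta|^{4/9}[\mo:\mn]^{1/2}$, whereas the claimed exponent is $5/12<4/9$. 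The ``multiplicative structure'' patch you sketch (using $v,\alpha v,\dots,\alpha^{d-1}v$) only constrains $\lambda_j(\mn)$ in terms of $\lambda_1(\mn)$ and $\lambda_j(\mo)$, which leaves the same linear program and does not close the gap.

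The missing idea is to bypass lattice-point counts entirely and work directly with the covolume. Take $X=\{x_1,\dots,x_m\}\subset\mn$ realizing the first $m$ successive minima, so that the covolume of the sublattice they span is $\asymp_d\lambda_1\cdots\lambda_m$. For each $g\in G$ one can project orthogonally onto an $m$-dimensional coordinate subspace so that the image has covolume at least $2^{-m}\bigl|\det(\sigma(x))^{\sigma\in gS}_{x\in X}\bigr|$; since projection does not increase covolume, this yields
\[
\lambda_1\cdots\lambda_m\;\gg_d\;\bigl|\det(\sigma(x))^{\sigma\in gS}_{x\in X}\bigr|\qquad\text{for every }g\in G.
\]
Taking the geometric mean over $g\in G$ and invoking Theorem~\ref{thm3} (via \eqref{eq4}) gives \eqref{eq21} with the sharp exponent $\frac{m(m-1)}{2d(d-1)}$ in the $2$-homogeneous case, in one stroke. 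In other words, Theorem~\ref{thm3b} is the wrong tool here: the lower bound on $\lambda_1\cdots\lambda_m$ should come straight from the arithmetic input Theorem~\ref{thm3}, not be reconstructed from lattice-point counts by optimization.
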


The example $k=\QQ(p^{1/d})$ mentioned by Bhargava et al.~below their \cite[Th.~1.6]{B} shows that the $2$-homogeneous case of Theorem~\ref{thm4b} cannot be improved in general. Indeed, if $p>d>1$ are prime numbers and $\mn=\mo$, then $G\cong\mathrm{Aff}(\FF_d)\cong(\ZZ/d\ZZ)\rtimes(\ZZ/d\ZZ)^\times$ is sharply $2$-transitive, while $\lambda_m\asymp_d|\Delta|^\frac{m-1}{d(d-1)}$ holds for all $m\in\{1,\dotsc,d\}$. The last relation follows from the straightforward upper bound $\lambda_m\ll_d p^\frac{m-1}{d}$ combined with $|\Delta|\asymp_d p^{d-1}$ and Minkowski's result \eqref{eq28} quoted below. The same example also shows that Corollary~\ref{cor2} cannot be improved in general. In contrast, the sharpness of \eqref{eq2b}--\eqref{eq3b} and \eqref{eq21}--\eqref{eq20} is less clear to us.

Theorem~\ref{thm4b} readily yields two-sided bounds for individual successive minima, extending the result of Bhargava et al.~\cite[Th.~1.6]{B} mentioned in the previous paragraph.

\begin{corollary}\label{cor3} Let $\lambda_1\leq\dotsb\leq\lambda_d$ be the successive minima of a nonzero ideal $\mn\subset\mo$ embedded as a lattice in $k\otimes_\QQ\RR$. Then for all $m\in\{1,\dotsc,d\}$ we have
\begin{align}\label{eq26}
\Delta^{\max\left(0,\frac{1}{d}-\frac{1}{2m}\right)}[\mo:\mn]^{\frac{1}{d}}&\ll_d\lambda_m
\ll_d\Delta^{\min\left(\frac{1}{2d-2m+2},\frac{1}{d}\right)}[\mo:\mn]^{\frac{1}{d}}&&\text{in general};\\
\label{eq27}
\Delta^{\frac{m-1}{2d(d-1)}}[\mo:\mn]^{\frac{1}{d}}&\ll_d\lambda_m
\ll_d\Delta^{\frac{d+m-2}{2d(d-1)}}[\mo:\mn]^{\frac{1}{d}}&&\text{if $G$ is $2$-homogeneous}.
\end{align}
\end{corollary}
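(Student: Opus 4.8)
The plan is to deduce Corollary~\ref{cor3} from Theorem~\ref{thm4b}, from Minkowski's result \eqref{eq28} (which gives $\lambda_1\cdots\lambda_d\asymp_d|\Delta|^{1/2}[\mo:\mn]$), and from the sole structural fact that the successive minima are nondecreasing, $\lambda_1\leq\dotsb\leq\lambda_d$. Combining \eqref{eq21} (valid for $1\leq m\leq d-1$) with \eqref{eq28} (the case $m=d$, where $\max(0,\tfrac12)=\tfrac12$) yields the uniform lower bound
\[\lambda_1\cdots\lambda_m\;\gg_d\;|\Delta|^{\max(0,\,\frac md-\frac12)}\,[\mo:\mn]^{m/d}\qquad(1\leq m\leq d),\]
and combining \eqref{eq20} applied to the index $m-1$ (valid for $2\leq m\leq d$) with \eqref{eq28} (the case $m=1$, where $\min(\tfrac12,1)=\tfrac12$) yields the uniform upper bound
\[\lambda_m\lambda_{m+1}\cdots\lambda_d\;\ll_d\;|\Delta|^{\min(\frac12,\,\frac{d-m+1}{d})}\,[\mo:\mn]^{(d-m+1)/d}\qquad(1\leq m\leq d).\]
When $G$ is $2$-homogeneous the exponents of $|\Delta|$ improve, by Theorem~\ref{thm4b}, to $\tfrac{m(m-1)}{2d(d-1)}$ and $\tfrac{(d-m+1)(d+m-2)}{2d(d-1)}$ respectively; at $m=1$ and $m=d$ these coincide with the general exponents, which is what makes the two boundary cases consistent.

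The monotonicity of the $\lambda_i$ gives the elementary one-sided inequalities $\lambda_m^{\,m}\geq\lambda_1\cdots\lambda_m$ and $\lambda_m^{\,d-m+1}\leq\lambda_m\lambda_{m+1}\cdots\lambda_d$, and the two halves of \eqref{eq26}--\eqref{eq27} follow by extracting roots. For the lower bound, $\lambda_m\geq(\lambda_1\cdots\lambda_m)^{1/m}$ combined with the first display turns the $|\Delta|$-exponent into $\tfrac1m\max(0,\tfrac md-\tfrac12)=\max(0,\tfrac1d-\tfrac1{2m})$ and the $[\mo:\mn]$-exponent into $\tfrac1m\cdot\tfrac md=\tfrac1d$; in the $2$-homogeneous case the same root sends $\tfrac{m(m-1)}{2d(d-1)}$ to $\tfrac{m-1}{2d(d-1)}$. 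These are exactly the left-hand sides of \eqref{eq26} and \eqref{eq27}.

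For the upper bound, $\lambda_m\leq(\lambda_m\lambda_{m+1}\cdots\lambda_d)^{1/(d-m+1)}$ combined with the second display turns the $[\mo:\mn]$-exponent into $\tfrac{1}{d-m+1}\cdot\tfrac{d-m+1}{d}=\tfrac1d$ and the $|\Delta|$-exponent into $\min\bigl(\tfrac{1}{2(d-m+1)},\tfrac1d\bigr)=\min\bigl(\tfrac1{2d-2m+2},\tfrac1d\bigr)$; in the $2$-homogeneous case the same root sends $\tfrac{(d-m+1)(d+m-2)}{2d(d-1)}$ to $\tfrac{d+m-2}{2d(d-1)}$. These are exactly the right-hand sides of \eqref{eq26} and \eqref{eq27}, completing the deduction.

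I do not expect a genuine obstacle: Corollary~\ref{cor3} is a formal consequence of Theorem~\ref{thm4b}, and the work is pure bookkeeping. The only points that need attention are (i) the two boundary indices $m=1$ and $m=d$, which lie outside the range of Theorem~\ref{thm4b} and must be supplied by Minkowski's \eqref{eq28}, and (ii) verifying that the exponent arithmetic — the identities $\tfrac1m\max(0,\tfrac md-\tfrac12)=\max(0,\tfrac1d-\tfrac1{2m})$, $\min(a,b)/c=\min(a/c,b/c)$ for $c>0$, and their analogues — produces precisely the exponents printed in \eqref{eq26}--\eqref{eq27}.
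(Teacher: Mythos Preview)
Your proposal is correct and matches the paper's own proof essentially line for line: the paper also extends \eqref{eq21}--\eqref{eq20} (and their $2$-homogeneous variants) to the boundary indices via \eqref{eq28}, then takes the $m$-th root for the lower bound and the $(d-m)$-th root of \eqref{eq20} (equivalently the $(d-m+1)$-th root after shifting the index) for the upper bound. The only difference is presentational---the paper compresses the bookkeeping you spell out into two sentences.
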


To form an idea of the accuracy of \eqref{eq27}, it is instructive to observe that the two sides differ by a factor of $\Delta^{\frac{1}{2d}}$. Moreover, the product of the left hand side over $m\in\{1,\dotsc,d\}$ equals $\Delta^{\frac{1}{4}}[\mo:\mn]$, while the same for the right hand side equals $\Delta^{\frac{3}{4}}[\mo:\mn]$. This should be compared with the product of the $\lambda_m$'s, which by Minkowski's theorem \cite[p.~124, Th.~3]{GL} is
\begin{equation}\label{eq28}\lambda_1\cdots\lambda_d\asymp_d|\Delta|^\frac{1}{2}[\mo:\mn].\end{equation}

The proof of Theorem~\ref{thm3b} combines group theory, ramification theory, and the geometry of numbers. The main idea is to obtain an upper bound for $|\mn\cap\cB|$ by projecting $\mn\cap\cB$ onto well-chosen ``coordinate subspaces'' $\RR^S$ of $\CC^\Sigma$ for $S\subset\Sigma$, and then compare it with the lower bound of Theorem~\ref{thm1}. We make sure that the projections of $\mn\cap\cB$ generate lattices in their ambient spaces $\RR^S$, and then we succeed by bounding from below the product of covolumes of those lattices. The proof of Theorem~\ref{thm4b} is similar, but it focuses on successive minima in place of lattice point counts. In order to formulate the key arithmetic ingredient of both proofs, Theorem~\ref{thm3} below, we need to introduce further notation.

For a nonzero prime ideal $\mpr\subset\mo$ dividing a rational prime $p$, let $e_\mpr$ (resp. $f_\mpr$) denote the ramification index (resp. inertia degree) of the local field extension $k_\mpr/\QQ_p$. By \cite[Ch.~III, \S 2]{N}, the exponent of $\mpr$ in the different ideal of $\mo$ equals $e_\mpr-1$ when $p\nmid e_\mpr$, and it lies between $e_\mpr$ and $e_\mpr-1+v_\mpr(e_\mpr)$ when $p\mid e_\mpr$ (which can only occur for $p\leq d$). Therefore, the \emph{tame discriminant} $\Dt$, defined as
\begin{equation}\label{eq5}
\Dt:=\prod_p p^{d-f_p}\quad\text{with}\quad f_p:=\sum_{\mpr\mid p}f_\mpr,
\end{equation}
divides the discriminant $\Delta$, and it satisfies
\begin{equation}\label{eq4}
|\Delta|<2^{d^3}\Dt.
\end{equation}
The last bound is rather crude, and it can be verified as follows. The ratio $\Delta/\Dt$ divides the norm of the ideal $\prod_{p\leq d}\prod_{\mpr\mid p}\mpr^{v_\mpr(e_\mpr)}$, which is a divisor of the principal ideal $\prod_{p\leq d}\prod_{\mpr\mid p}(e_\mpr)$. Therefore,
\[\frac{|\Delta|}{\Dt}\leq\prod_{p\leq d}\prod_{\mpr\mid p}e_\mpr^d<\prod_{p\leq d}2^{d\sum_{\mpr\mid p}e_\mpr}<2^{d^3}.\]

\begin{theorem}\label{thm3} Let $\mn\subset\mo$ be a nonzero ideal, and let $m\in\{1,\dotsc,d\}$. For any $m$-subsets $X\subset\mn$ and $S\subset\Sigma$,
\begin{equation}\label{eq6}
\prod_{g\in G}{\det}^2(\sigma(x))^{\sigma\in gS}_{x\in X}\quad\text{is divisible by}\quad\Dt^{|G|\max\left(0,\frac{2m}{d}-1\right)}[\mo:\mn]^{|G|\frac{2m}{d}}.
\end{equation}
If $G$ is $2$-homogeneous, then the exponent of $\Dt$ can be improved to $|G|\frac{m(m-1)}{d(d-1)}$.
\end{theorem}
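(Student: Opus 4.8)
The plan is to interpret the product over $g\in G$ in \eqref{eq6} as a norm-type quantity that can be analyzed prime by prime. For a fixed $m$-subset $S\subset\Sigma$ and $m$-subset $X=\{x_1,\dots,x_m\}\subset\mn$, the matrix $(\sigma(x_i))_{\sigma\in S,\,1\le i\le m}$ has entries in $K$, so its determinant $D_S(X)$ lies in $K$, and $\prod_{g\in G}D_{gS}(X)^2$ is $G$-invariant, hence a (nonzero or zero) rational number; in fact it is a rational integer, since each $\sigma(x_i)$ is an algebraic integer when $x_i\in\mo$. The first step is therefore to reduce the divisibility assertion to a collection of local assertions: for every rational prime $p$, the $p$-adic valuation of $\prod_{g\in G}D_{gS}(X)^2$ is at least $|G|\bigl(\max(0,\tfrac{2m}{d}-1)\,v_p(\Dt)+\tfrac{2m}{d}\,v_p([\mo:\mn])\bigr)$. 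Here $v_p(\Dt)=d-f_p$ by \eqref{eq5} and $v_p([\mo:\mn])=\sum_{\mpr\mid p}f_\mpr v_\mpr(\mn)$.

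The heart of the matter is the local computation at a fixed prime $p$. The plan is to pass to a decomposition group: fix a prime $\mathfrak P$ of $K$ above $p$ with decomposition group $G_p\le G$ of order $e f$ (the product of the ramification index and inertia degree of $K/\QQ$ at $p$), and use the coset structure $G=\bigsqcup_j g_j G_p$. Under the embedding $K\hookrightarrow K_{\mathfrak P}$ determined by $\mathfrak P$, each $\sigma\in\Sigma$ restricts to one of the places of $k$ above $p$, grouping $\Sigma$ into the fibers over $\{\mpr\mid p\}$ of sizes $e_\mpr f_\mpr$. For $g\in G$, the valuation $v_{\mathfrak P}(D_{gS}(X))$ depends only on which $\mpr$-blocks the columns $gS$ meet and on the $\mpr$-adic data of the $x_i$; summing $v_{\mathfrak P}$ over the coset representatives and then over all $g\in G_p$ (which permute things transitively) is what produces the factor $|G|$ and, crucially, the normalization by $d=\sum e_\mpr f_\mpr$. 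I would exploit two inputs: first, the elementary fact that for an $m\times m$ matrix over a valued field whose columns are indexed by places, the valuation of the determinant is bounded below using the individual column valuations together with a combinatorial defect coming from columns that lie in the same completion — this is where the tame different exponent $e_\mpr-1$, and hence $d-f_p$, enters, via the structure of an $\mo_\mpr$-basis versus the scaled-by-uniformizer columns; second, the lower bound $v_\mpr$-content of the $x_i$ contributes the $[\mo:\mn]$ factor through $v_\mpr(\mn)$. Averaging over $g\in G$ smooths out the dependence on $S$ and gives the clean exponent $\tfrac{2m}{d}$ on $[\mo:\mn]$ and $\max(0,\tfrac{2m}{d}-1)$ on $\Dt$; the $\max(0,\cdot)$ appears because when $2m\le d$ one can choose cosets so that the $m$ columns land in distinct $\mpr$-blocks, killing the different contribution, whereas for $2m>d$ a pigeonhole argument forces overlap in at least $2m-d$ (counted with multiplicity) blocks.

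The $2$-homogeneous refinement uses the stronger combinatorial input that $G$ acts transitively on $2$-subsets of $\Sigma$, so the number of pairs of columns of $gS$ that collide in a given $\mpr$-block, averaged over $g\in G$, is exactly $\binom{m}{2}\big/\binom{d}{2}$ times the total number of colliding pairs available, i.e.\ $\frac{m(m-1)}{d(d-1)}$ per block-pair; since each collision in an $\mpr$-block costs one unit of $v_\mpr(\text{different})\ge e_\mpr-1$ in the worst case and these sum to $d-f_p$ over $\mpr\mid p$, the total different contribution becomes exactly $|G|\frac{m(m-1)}{d(d-1)}v_p(\Dt)$. The point is that $2$-homogeneity upgrades the crude pigeonhole count $\max(0,2m-d)$ to the exact pair-counting expectation, which is smaller in the intermediate range $2\le m\le d-2$.

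I expect the main obstacle to be the precise local determinant estimate: showing that $\sum_{g\in G_p}v_{\mathfrak P}\bigl(D_{g_jgS}(X)\bigr)$ is controlled below by the right multiple of (different exponent) $+$ ($\mn$-valuation), uniformly in the coset $g_j$ and in $S$. This requires choosing, in each completion $k_\mpr$, a convenient $\mo_\mpr$-module basis adapted to the column set and carefully tracking how the Galois action inside $G_p$ permutes the relevant rows; one must also handle wild ramification, where the different exponent exceeds $e_\mpr-1$, by simply discarding the excess (which is harmless because we only claim divisibility by $\Dt$, not by $\Delta$). A secondary technical point is verifying that the product in \eqref{eq6} is genuinely a rational integer and that the exponents combine correctly across the $\mpr\mid p$ with their multiplicities $e_\mpr f_\mpr$; this is bookkeeping, but it is where the identity $\sum_{\mpr\mid p}e_\mpr f_\mpr=d$ does the essential work of turning column-counts into the uniform exponents $\tfrac{2m}{d}$ and $\tfrac{m(m-1)}{d(d-1)}$.
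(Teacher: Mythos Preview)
Your overall architecture is right and matches the paper's: reduce to a prime-by-prime inequality on $v_p$ of the product, prove a local lower bound for $v_p\bigl(\det^2(\sigma(x))^{\sigma\in gS}_{x\in X}\bigr)$ in terms of how $gS$ meets certain blocks inside $\Sigma$, then average over $g\in G$ using transitivity (and $2$-homogeneity for the refinement). The averaging step and the role of $2$-homogeneity are exactly what the paper does.

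The genuine gap is in your choice of blocks. You partition $\Sigma$ into the fibers $I_\mpr$ over the primes $\mpr\mid p$, of size $e_\mpr f_\mpr$, and expect that columns landing in the same $I_\mpr$ (``the same completion'') produce the tame-different contribution $e_\mpr-1$. This is too coarse to be correct. Take $p$ inert in $k$: a single $\mpr$ with $e_\mpr=1$, $f_\mpr=d$, and $\mn=\mo$. Then $I_\mpr=\Sigma$, so every column is ``in the same completion,'' yet $p\nmid\Dt$ and $v_p\bigl(\det^2(\sigma(x))^{\sigma\in S}_{x\in X}\bigr)$ can vanish (e.g.\ $X$ part of an integral basis). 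Any collision count based on the blocks $I_\mpr$ will therefore claim too much. What the paper uses instead (its Proposition~1) is the finer partition of each $I_\mpr$ into \emph{inertial equivalence classes} $I_{\mpr,l}$ of size $e_\mpr$, indexed by the $f_\mpr$ embeddings of the maximal unramified subextension $l_\mpr\subset k_\mpr$ into $\ov{\QQ_p}$: two elements of $I_\mpr$ are equivalent when their $\QQ_p$-linear extensions to $k_\mpr$ agree on $l_\mpr$. The local bound is
\[
v_p\Bigl(\det{}^2(\sigma(x))^{\sigma\in S}_{x\in X}\Bigr)\ \ge\ \sum_{\mpr\mid p}\frac{1}{e_\mpr}\sum_{l=1}^{f_\mpr}s_{\mpr,l}\bigl(2v_\mpr(\mn)+s_{\mpr,l}-1\bigr),\qquad s_{\mpr,l}:=|S\cap I_{\mpr,l}|,
\]
and the proof genuinely needs this finer structure: within a single $I_{\mpr,l}$ the extended embeddings are $l_\mpr$-linear, and since $\mo_{l_\mpr}^\times$ surjects onto the nonzero residues of $k_\mpr$ one can perform column operations over $\mo_{l_\mpr}$ that make the $v_\mpr$-valuations of the columns strictly increasing. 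That is the mechanism producing the term $\binom{s_{\mpr,l}}{2}$ per block, which after averaging over $G$ yields $\sum_\mpr f_\mpr(e_\mpr-1)=d-f_p=v_p(\Dt)$. Your sketch never isolates $l_\mpr$, and without it the ``$\mo_\mpr$-basis versus scaled-by-uniformizer'' picture you allude to does not go through.

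A smaller point: your explanation of the $\max(0,\cdot)$ via ``choosing cosets so that the $m$ columns land in distinct $\mpr$-blocks'' versus ``pigeonhole'' is not how the bound arises. One sums over \emph{all} $g\in G$; the $\max(0,\tfrac{2m}{d}-1)$ comes from the pointwise inequality $1_{gS}(\sigma)1_{gS}(\sigma')\ge 1_{gS}(\sigma)+1_{gS}(\sigma')-1$ for $\{0,1\}$-valued indicators (together with the trivial lower bound $\ge 0$), applied before averaging. Under $2$-homogeneity the average of $1_{gS}(\sigma)1_{gS}(\sigma')$ over $g$ is computed exactly as $\tfrac{m(m-1)}{d(d-1)}$, as you correctly say.
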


Note that $d$ divides $|G|$, and also $\binom{d}{2}$ divides $G$ when $G$ is $2$-homogeneous, so the exponents of $\Dt$ and $[\mo:\mn]$ are nonnegative integers. The next theorem is very similar to the $2$-homogeneous case of Theorem~\ref{thm3}. We do not need it for the proof of Theorem~\ref{thm3b}, but we present it for its intrinsic beauty and interest.

\begin{theorem}\label{thm4} Let $\mn\subset\mo$ be a nonzero ideal, and let $m\in\{2,\dotsc,d\}$. For any $m$-subset $X\subset\mn$,
\begin{equation}\label{eq8}
\prod_{\substack{S\subset\Sigma\\|S|=m}}{\det}^2(\sigma(x))^{\sigma\in S}_{x\in X}\quad\text{is divisible by}\quad\Dt^{\binom{d-2}{m-2}}[\mo:\mn]^{2\binom{d-1}{m-1}}.
\end{equation}
\end{theorem}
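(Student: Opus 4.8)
The plan is to reduce the statement, one rational prime at a time, to a local assertion about $\mo\otimes\ZZ_p$ that is then settled by ramification theory. If $X$ is linearly dependent over $\QQ$, every determinant in \eqref{eq8} vanishes and there is nothing to prove; so assume $X$ is independent. Since $g\in G=\Gal(K/\QQ)$ carries $\det(\sigma(x))^{\sigma\in S}_{x\in X}$ to $\pm\det(\sigma(x))^{\sigma\in gS}_{x\in X}$, with the sign killed by squaring, the product in \eqref{eq8} is $G$-invariant, hence lies in $K^G=\QQ$; being a product of algebraic integers, it is a rational integer. It therefore suffices to show, for every prime $p$, that its $p$-adic valuation is at least $\binom{d-2}{m-2}v_p(\Dt)+2\binom{d-1}{m-1}v_p([\mo:\mn])$. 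Fix $p$, fix an embedding $\ov{\QQ}\hookrightarrow\ov{\QQ}_p$, and let $w$ be the valuation of $\ov{\QQ}_p$ with $w(p)=1$, which extends $v_p$. Writing $\Sigma_\mpr\subset\Sigma$ for the set of embeddings $k\hookrightarrow\ov{\QQ}_p$ inducing $\mpr\mid p$, we have $\Sigma=\bigsqcup_{\mpr\mid p}\Sigma_\mpr$, $|\Sigma_\mpr|=e_\mpr f_\mpr$, and $w(\sigma(y))=v_\mpr(y)/e_\mpr$ for $\sigma\in\Sigma_\mpr$; and the $p$-adic valuation of the product in \eqref{eq8} equals $2\sum_{|S|=m}w\bigl(\det(\sigma(x))^{\sigma\in S}_{x\in X}\bigr)$.

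The next step would separate off the contribution of $\mn$. View $X\subset\mn\otimes\ZZ_p=\prod_{\mpr\mid p}\mpr^{a_\mpr}\mo_\mpr$, so that $v_p([\mo:\mn])=\sum_\mpr f_\mpr a_\mpr$, and write the $\mpr$-component of $x\in X$ as $\pi_\mpr^{a_\mpr}\tilde x^{(\mpr)}$ with $\pi_\mpr$ a uniformizer of $\mo_\mpr$ and $\tilde x^{(\mpr)}\in\mo_\mpr$; put $\tilde x:=(\tilde x^{(\mpr)})_{\mpr}\in\mo\otimes\ZZ_p$. In the matrix $(\sigma(x))^{\sigma\in S}_{x\in X}$ the row indexed by $\sigma\in\Sigma_\mpr$ has the common factor $\sigma(\pi_\mpr)^{a_\mpr}$, of valuation $a_\mpr/e_\mpr$; pulling these out gives $\det(\sigma(x))^{\sigma\in S}_{x\in X}=\bigl(\prod_{\sigma\in S}\sigma(\pi_{\mpr(\sigma)})^{a_{\mpr(\sigma)}}\bigr)\det(\sigma(\tilde x))^{\sigma\in S}_{x\in X}$, where $\mpr(\sigma)$ is the prime with $\sigma\in\Sigma_{\mpr(\sigma)}$. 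Summing the valuation of the first factor over all $m$-subsets $S$, and using that each $\sigma$ lies in exactly $\binom{d-1}{m-1}$ of them, yields $\binom{d-1}{m-1}\sum_{\mpr}e_\mpr f_\mpr\cdot(a_\mpr/e_\mpr)=\binom{d-1}{m-1}v_p([\mo:\mn])$, which after doubling accounts for the factor $[\mo:\mn]^{2\binom{d-1}{m-1}}$. It remains to prove $2\sum_{|S|=m}w\bigl(\det(\sigma(\tilde x))^{\sigma\in S}_{x\in X}\bigr)\geq\binom{d-2}{m-2}(d-f_p)=\binom{d-2}{m-2}v_p(\Dt)$, the $\tilde x$ now being arbitrary integral elements of $\mo\otimes\ZZ_p$.

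For this residual estimate I would use the structure $\mo_\mpr=\mo_{\mpr,0}[\pi_\mpr]$, where $\mo_{\mpr,0}$ is the ring of integers of the maximal unramified subextension (of degree $f_\mpr$) and $\pi_\mpr$ is a root of an Eisenstein polynomial $E_\mpr$ of degree $e_\mpr$ over $\mo_{\mpr,0}$. Expand each $\tilde x^{(\mpr)}$ as a polynomial in $\pi_\mpr$ with coefficients in $\mo_{\mpr,0}$, and partition $\Sigma_\mpr=\bigsqcup_{\tau}\Sigma_{\mpr,\tau}$ according to the restriction $\tau$ of $\sigma$ to $\mo_{\mpr,0}$: there are $f_\mpr$ such fibres, each of size $e_\mpr$, and for $\sigma\in\Sigma_{\mpr,\tau}$ the quantities $\sigma(\pi_\mpr)$ run over the $e_\mpr$ distinct roots of $E_\mpr^{\tau}$ (each of valuation $1/e_\mpr$), while $\sigma(\tilde x)$ becomes one fixed polynomial in $\sigma(\pi_\mpr)$ depending only on the fibre. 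Treating the $\sigma(\pi_\mpr)$ with $\sigma\in S$ as indeterminates, $\det(\sigma(\tilde x))^{\sigma\in S}_{x\in X}$ is a polynomial with algebraic-integer coefficients that vanishes whenever two indeterminates from the same fibre coincide; dividing by $\prod_{(\mpr,\tau)}\prod_{\{\sigma,\sigma'\}\subset S\cap\Sigma_{\mpr,\tau}}\bigl(\sigma(\pi_\mpr)-\sigma'(\pi_\mpr)\bigr)$ — each factor monic in one variable, so the division preserves integrality of coefficients — and evaluating, we get $w\bigl(\det(\sigma(\tilde x))^{\sigma\in S}_{x\in X}\bigr)\geq\sum_{(\mpr,\tau)}\sum_{\{\sigma,\sigma'\}\subset S\cap\Sigma_{\mpr,\tau}}w\bigl(\sigma(\pi_\mpr)-\sigma'(\pi_\mpr)\bigr)$. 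Summing over all $m$-subsets $S$: a fixed pair $\{\sigma,\sigma'\}$ from a fibre $\Sigma_{\mpr,\tau}$ appears in $\binom{d-2}{m-2}$ of them, and $\sum_{\{\sigma,\sigma'\}\subset\Sigma_{\mpr,\tau}}2w\bigl(\sigma(\pi_\mpr)-\sigma'(\pi_\mpr)\bigr)=w\bigl(\operatorname{disc}E_\mpr^{\tau}\bigr)=v_p\bigl(\operatorname{disc}E_\mpr\bigr)=d_\mpr$, the exponent of $\mpr$ in the different, which by \cite[Ch.~III, \S 2]{N} is at least $e_\mpr-1$. Hence $2\sum_{|S|=m}w\bigl(\det(\sigma(\tilde x))^{\sigma\in S}_{x\in X}\bigr)\geq\binom{d-2}{m-2}\sum_{\mpr}f_\mpr d_\mpr\geq\binom{d-2}{m-2}\sum_{\mpr}f_\mpr(e_\mpr-1)=\binom{d-2}{m-2}(d-f_p)$, which is exactly $\binom{d-2}{m-2}v_p(\Dt)$, and the theorem follows.

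The heart of the matter, and the main obstacle, is the local argument of the last paragraph: correctly fibring $\Sigma_\mpr$ over the embeddings of its unramified core and identifying each fibre with the roots of an Eisenstein polynomial, carrying out the Vandermonde division over the (non-UFD) ring of algebraic integers — which is precisely why one keeps the linear factors monic — and then extracting the binomial coefficient $\binom{d-2}{m-2}$ from the incidences between pairs and $m$-subsets. Passing from the full different exponents $d_\mpr$ to the tame lower bounds $e_\mpr-1$ is exactly what turns $\Delta$ into $\Dt$, whereas the $[\mo:\mn]$-contribution is essentially formal; this is, unsurprisingly, the same circle of ideas underlying Theorem~\ref{thm3}.
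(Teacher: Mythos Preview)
Your argument is correct, and its global architecture matches the paper's: reduce to a prime-by-prime valuation inequality, partition $\Sigma$ into the inertial equivalence classes $\Sigma_{\mpr,\tau}$ (the paper's $I_{\mpr,l}$), establish a per-$S$ lower bound for $v_p\bigl(\det^2\bigr)$, and then sum over all $m$-subsets $S$ using the incidence counts $\binom{d-1}{m-1}$ and $\binom{d-2}{m-2}$. Where you diverge is in the proof of the per-$S$ bound. The paper isolates this as Proposition~\ref{prop1} and proves it by linear algebra over local rings: elementary column operations over $\mo_{l_\mpr}$ force the $v_\mpr$-valuations of the columns to become strictly increasing, and a subsequent row reduction over $\tilde\mo$ yields the estimate $\sum_\mpr e_\mpr^{-1}\sum_l s_{\mpr,l}(2v_\mpr(\mn)+s_{\mpr,l}-1)$. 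You instead exploit the Eisenstein presentation $\mo_{k_\mpr}=\mo_{l_\mpr}[\pi_\mpr]$ to write each entry in a given fibre as a fixed integral polynomial evaluated at $\sigma(\pi_\mpr)$, and then extract Vandermonde factors $\prod(\sigma(\pi_\mpr)-\sigma'(\pi_\mpr))$ from the determinant. Your route is arguably more transparent for this particular theorem and has the pleasant feature of naturally producing the full different exponent $d_\mpr$ before one passes to the tame lower bound $e_\mpr-1$; the paper's Proposition~\ref{prop1}, on the other hand, is formulated once and reused verbatim for both Theorem~\ref{thm3} and Theorem~\ref{thm4}, and its column-reduction step handles the $v_\mpr(\mn)$ contribution and the ramification contribution in one stroke rather than separating them as you do.
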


The determinants in \eqref{eq6} and \eqref{eq8} are only defined up to a factor of $\pm 1$, because we have not specified any ordering on $X$ and $S$. However, their squares are well-defined. If $m=d$, then Theorems~\ref{thm3} and \ref{thm4} follow from the fact that either $\det(\sigma(x))^{\sigma\in\Sigma}_{x\in X}$ is zero, or it equals the covolume of a full rank sublattice of $\mn$. Another relatively simple special case is when $\mn=\mo$ and $X=\{1,x,\ldots,x^{m-1}\}$ for some $x\in\mo$. Then, Theorem~\ref{thm4} and the $2$-homogeneous case of Theorem~\ref{thm3} are consequences of the Vandermonde determinant formula and the definition of the (usual) discriminant $\Delta$ of $k$. Not surprisingly, we shall only use the divisibility conclusion when the participating determinants are nonzero. On the other hand, it seems to be an interesting and difficult problem to characterize the vanishing of these determinants. One result in this direction is Chebotarev's theorem from 1926: if $p$ is a prime, $k$ is the $p$-th cyclotomic field, and the elements of $X$ are $p$-th roots of unity, then none of these determinants vanish (see \cite{T} for a proof and for useful references). Another result is the following simple observation: if $k$ contains a proper subfield $k'$ with $m=[k:k']$, and the $m$-subset $X\subset k$ is linearly dependent over $k'$, then there is an $m$-subset $S\subset\Sigma$ such that all embeddings $\sigma\in S$ coincide on $k'$, whence $\det(\sigma(x))^{\sigma\in S}_{x\in X}=0$. Motivated by this example, we ask the following question:

\begin{question}
Assume that $X\subset k$ and $S\subset\Sigma$ satisfy $|X|=|S|$ and $\det(\sigma(x))^{\sigma\in S}_{x\in X}=0$. Does there exist a subfield $k'$ of $k$ such that $X$ is linearly dependent over $k'$, and all embeddings $\sigma\in S$ coincide on $k'$?
\end{question}

If $X$ is of size $m$ and $G$ is $m$-homogeneous (e.g.\ when $G=S_d$ or $G=A_d$), then the answer to this question is affirmative. Indeed, in this case, the vanishing of one $m\times m$ minor of $\det(\sigma(x))^{\sigma\in\Sigma}_{x\in X}$ implies the vanishing of all $m\times m$ minors, which can happen if and only if $X$ is linearly dependent over $\QQ$.

\begin{acknowledgement}
We are grateful to the referees for their careful reading and valuable comments. We also thank P\'eter P\'al P\'alfy and Gergely Z\'abr\'adi for helpful discussions.
\end{acknowledgement}

\section{Non-archimedean investigations}

In this section, we prove Theorems~\ref{thm3} and \ref{thm4}. The two sides of \eqref{eq6} and \eqref{eq8} are rational integers, hence it suffices to show, for every rational prime $p$, that the exponent of $p$ is at least as large on the left hand side as on the right hand side (with the convention that the $p$-exponent of zero is infinity).

We fix $p$ and an embedding $\ov{\QQ}\hookrightarrow\ov{\QQ_p}$, then we can think of the elements of $\Sigma$ as the embeddings $\sigma:k\hookrightarrow\ov{\QQ_p}$. For each $\sigma\in\Sigma$, there is a unique prime ideal $\mpr\mid p$ and a unique $\QQ_p$-linear extension $\tilde\sigma:k_\mpr\hookrightarrow\ov{\QQ_p}$ of $\sigma$. Denoting by $I_\mpr$ the set of $\sigma$'s corresponding to a given $\mpr$, the extension map $\sigma\mapsto\tilde\sigma$ is a bijection $I_\mpr\overset{\sim}\to\Hom_{\QQ_p}(k_\mpr,\ov{\QQ_p})$ with inverse being the restriction map. In particular, $I_\mpr$ is a $\Gal(\ov{\QQ_p}/\QQ_p)$-orbit on $\Sigma$ of cardinality $[k_\mpr:\QQ_p]=e_\mpr f_\mpr$. Let $v_p$ be the unique additive valuation on $\ov{\QQ_p}$ extending the normalized additive valuation on $\QQ_p$, and let $v_\mpr$ be the normalized additive valuation on $k_\mpr$. By ``normalized'' we mean that $v_p(\QQ_p^\times)=\ZZ$ and $v_\mpr(k_\mpr^\times)=\ZZ$. Then we have the important identity
\begin{equation}\label{eq11}
v_p(\tilde\sigma(x))=\frac{1}{e_\mpr}v_\mpr(x),\qquad\tilde\sigma\in\Hom_{\QQ_p}(k_\mpr,\ov{\QQ_p}),\qquad x\in k_\mpr^\times.
\end{equation}
See \cite[Ch.~II, \S 8]{N} for more details.
Let $l_\mpr$ be the maximal unramified subextension of $k_\mpr/\QQ_p$, then
\[[k_\mpr:l_\mpr]=e_\mpr\qquad\text{and}\qquad [l_\mpr:\QQ_p]=f_\mpr.\]
Identifying $I_\mpr$ with $\Hom_{\QQ_p}(k_\mpr,\ov{\QQ_p})$ as above, we can break up $I_\mpr$ into $f_\mpr$ subsets $I_{\mpr,l}$ of equal size $e_\mpr$ according to how $l_\mpr$ gets embedded into $\ov{\QQ_p}$. In the end, two elements of $\Sigma$ belong to the same subset $I_{\mpr,l}$ if and only if they induce the same non-archimedean valuation $|\cdot|_\mpr$ on $k$ and their $\QQ_p$-linear extensions agree on $l_\mpr$; we shall call two such elements of $\Sigma$ \emph{inertially equivalent}.

The proofs of Theorems~\ref{thm3} and \ref{thm4} rely on the key observation that the $p$-adic valuation of the participating determinants can be estimated in terms of the inertial equivalence classes $I_{\mpr,l}$.

\begin{proposition}\label{prop1} Let $\mn\subset\mo$ be a nonzero ideal, and let $m\in\{1,\dotsc,d\}$. For any $m$-subsets $X\subset\mn$ and $S\subset\Sigma$,
\begin{equation}\label{eq9}
v_p\Bigl({\det}^2(\sigma(x))^{\sigma\in S}_{x\in X}\Bigr)\geq
\sum_{\mpr\mid p}\frac{1}{e_\mpr}\sum_{l=1}^{f_\mpr}s_{\mpr,l}\bigl(2v_\mpr(\mn)+s_{\mpr,l}-1\bigr),
\end{equation}
where $s_{\mpr,l}$ abbreviates $|S\cap I_{\mpr,l}|$, and $v_\mpr(\mn)$ stands for the exponent of $\mpr$ in $\mn$.
\end{proposition}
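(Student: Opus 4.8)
The plan is to estimate the $p$-adic valuation of the determinant by a careful choice of integral basis elements adapted to the ramification structure, then extract the divisibility by looking at how the rows and columns group according to the inertial equivalence classes $I_{\mpr,l}$. First I would reduce to a single prime $\mpr\mid p$ at a time: since the sets $I_{\mpr,l}$ partition $S$, and the estimate on the right-hand side is a sum over $\mpr$ and $l$, it suffices to bound the contribution of each block of columns indexed by a fixed $I_{\mpr,l}$. The key point is that the columns $(\sigma(x))_{x\in X}$ for $\sigma$ ranging over a single inertial class $I_{\mpr,l}$ are, up to the unramified twist, iterated ``derivatives'' in the uniformizer direction, so their determinant picks up the expected power of the local different — this is exactly where the factor $2v_\mpr(\mn)+s_{\mpr,l}-1$ should come from, via a telescoping Vandermonde-type argument on the $e_\mpr$ many embeddings that agree on $l_\mpr$.

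Concretely, after passing to $\ov{\QQ_p}$ and using the identity \eqref{eq11}, I would argue as follows. Fix $\mpr$ and $l$, and let $S\cap I_{\mpr,l}=\{\sigma_1,\dots,\sigma_s\}$ with $s=s_{\mpr,l}$; all these $\sigma_i$ extend to $\QQ_p$-embeddings $\tilde\sigma_i:k_\mpr\hookrightarrow\ov{\QQ_p}$ which coincide on $l_\mpr$ and differ only by the action of the ramification (wild inertia aside, by a Galois conjugate uniformizer). Expanding $\det(\sigma(x))^{\sigma\in S}_{x\in X}$ by Laplace along the column-blocks, each term is a product over $\mpr,l$ of minors of size $s_{\mpr,l}$; it therefore suffices to bound $v_p$ of each such $s_{\mpr,l}\times s_{\mpr,l}$ minor from below by $\frac{1}{e_\mpr}\binom{s_{\mpr,l}}{2}+\frac{1}{e_\mpr}s_{\mpr,l}v_\mpr(\mn)$, and then double it (squared determinant) and sum. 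For the lower bound on a single minor, I would mimic the classical computation of the different: choosing a $\QQ_p$-basis of $k_\mpr$ of the form (unramified part) $\times$ (powers of a uniformizer $\pi_\mpr$), the matrix $(\tilde\sigma_i(\pi_\mpr^j))$ is an $s\times e_\mpr$ sub-Vandermonde in the conjugates of $\pi_\mpr$; since $v_p(\tilde\sigma_i(\pi_\mpr))=\frac1{e_\mpr}$ and the conjugates of $\pi_\mpr$ are pairwise congruent only to low order, the minors of such a Vandermonde have $v_p \ge \frac{1}{e_\mpr}\binom{s}{2}$. The factor $v_\mpr(\mn)$ enters because $X\subset\mn$ forces $v_\mpr(x)\ge v_\mpr(\mn)$ for each $x$, hence each of the $s$ columns contributes at least $\frac{1}{e_\mpr}v_\mpr(\mn)$ to $v_p$ of the minor.

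The step I expect to be the main obstacle is making the Vandermonde estimate clean in the \emph{wildly} ramified case, i.e.\ when $p\mid e_\mpr$, and more generally handling the $X$-dependence correctly: the columns are not literally $(\sigma(\pi_\mpr^j))_j$ but $(\sigma(x))_{x\in X}$ for arbitrary $x\in\mn$, so I cannot just read off a Vandermonde structure — I must first expand each $\sigma(x)$ in the chosen local basis and then track how the valuations interact in the determinant. The right way around this is to pass to the local ring $\mo_\mpr$, write $X$ in terms of a basis $\{\omega_a\pi_\mpr^b\}$ (with $\omega_a$ lifting an $\FF$-basis of the residue extension, $0\le b<e_\mpr$), so that the $s\times m$ matrix $(\sigma(x))$ factors as (coefficient matrix over $\mo_\mpr$) $\times$ (the universal matrix $(\tilde\sigma_i(\omega_a\pi_\mpr^b))$); taking minors, the valuation of any $s\times s$ minor of the product is at least the minimum, over all $s$-subsets of the basis, of $v_p$ of the corresponding minor of the universal matrix, plus $\frac{s}{e_\mpr}v_\mpr(\mn)$ from the fact that the coefficient matrix has entries in $\pi_\mpr^{v_\mpr(\mn)}\mo_\mpr$. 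Bounding the universal minor below by $\frac{1}{e_\mpr}\binom{s}{2}$ uniformly — including wild ramification — is then a purely local computation with the different, which I would do by induction on $s$ peeling off one row at a time and using that the $\tilde\sigma_i$ are distinct on $k_\mpr$ so that their pairwise differences on a uniformizer have positive (though possibly $>\frac1{e_\mpr}$) valuation; crucially the inequality only needs the \emph{tame} bound $\binom{s}{2}/e_\mpr$, which is why the statement is phrased with $s_{\mpr,l}-1$ rather than anything sharper, and that slack is exactly what absorbs the wild contribution.
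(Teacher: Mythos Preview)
Your overall plan is sound and leads to a valid proof, but it follows a different route from the paper and contains two fixable slips. The paper does not use Laplace expansion or a Cauchy--Binet factorization through a universal Vandermonde matrix. Instead it controls $v_p(\det A)$ via the module index $[\tilde\mo^m:A\tilde\mo^m]$, which surjects onto $\prod_{\mpr,l}[\tilde\mo^{s_{\mpr,l}}:A_{\mpr,l}\tilde\mo^m]$ (this replaces your Laplace step; note also that the blocks are \emph{row} blocks, indexed by $\sigma$, not column blocks). For a single $t\times m$ block $B=A_{\mpr,l}$ the paper then performs column elimination over $\mo_{l_\mpr}$---exploiting that $\mo_{l_\mpr}^\times$ furnishes a full set of nonzero residue representatives for $\mo_{k_\mpr}$, so that $l_\mpr$-linear operations are simultaneously compatible with all the $\tilde\sigma_i$ in the block---to replace $x_1,\dots,x_m$ by $y_1,\dots,y_m\in\mo_{k_\mpr}$ with strictly increasing $v_\mpr$, whence $v_\mpr(y_j)\geq v_\mpr(\mn)+j-1$. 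A final row reduction to echelon form then gives the bound. Your determinantal approach is a legitimate alternative; the paper's elimination is slightly slicker because it never names a local basis or invokes Cauchy--Binet.

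Two corrections to your sketch. First, in the factorization $A_{\mpr,l}=U\cdot C$ the coefficient matrix $C$ has entries in $\ZZ_p$, \emph{not} in $\pi_\mpr^{v_\mpr(\mn)}\mo_{k_\mpr}$; the clean way to extract the contribution $\frac{s}{e_\mpr}v_\mpr(\mn)$ is to first write $A_{\mpr,l}=\mathrm{diag}\bigl(\tilde\sigma_i(\pi_\mpr)^{v_\mpr(\mn)}\bigr)_i\cdot A'$ with $A'$ built from the integral elements $x/\pi_\mpr^{v_\mpr(\mn)}\in\mo_{k_\mpr}$, and then run the universal-matrix argument on $A'$. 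Second, the wild-ramification worry is a red herring and the row-peeling induction is unnecessary: since all $\tilde\sigma_i$ in the block agree on $l_\mpr$, column $(a,b)$ of your universal matrix is the scalar $\omega_a$ times the vector $(\xi_i^b)_i$ with $\xi_i:=\tilde\sigma_i(\pi_\mpr)$; hence any $s$-subset of columns with a repeated exponent $b$ gives a zero minor, while for distinct exponents $b_1<\dots<b_s$ every monomial in the Leibniz expansion of $\det(\xi_i^{b_j})$ already has $v_p=\sum_j b_j/e_\mpr\geq\binom{s}{2}/e_\mpr$. No tame/wild dichotomy enters at all.
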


\begin{proof} We recall that $K$ is the compositum of the fields $\sigma(k)$ for $\sigma\in\Sigma$, and we write $\tilde K$ for the extension of $\QQ_p$ generated by $K$. We denote by $\tilde d$ the degree $[\tilde K:\QQ_p]$, and by $\tilde\mo$ the ring of integers of $\tilde K$. We shall think of $\tilde\mo^m$ as the set of column vectors of length $m$ with entries in $\tilde\mo$.

The $m$-set $S\subset\Sigma$ is partitioned into the $s_{\mpr,l}$-sets $S_{\mpr,l}:=S\cap I_{\mpr,l}$.
Accordingly, the $m\times m$ matrix $A:=(\sigma(x))^{\sigma\in S}_{x\in X}$ decomposes into the $s_{\mpr,l}\times m$ blocks
$A_{\mpr,l}:=(\sigma(x))^{\sigma\in S_{\mpr,l}}_{x\in X}$. Strictly speaking, these matrices are only defined up to a permutation of the rows and the columns, but this ambiguity disappears once we choose an ordering of the rows and the columns.

We shall assume that $\det A\neq 0$, for otherwise \eqref{eq9} is trivial. The natural isomorphism from $\tilde\mo^m$ to $\prod_\mpr\prod_l\tilde\mo^{s_{\mpr,l}}$ maps $A\tilde\mo^m$ into $\prod_\mpr\prod_l A_{\mpr,l}\tilde\mo^m$, hence it induces a surjective homomorphism from $\tilde\mo^m/A\tilde\mo^m$ onto $\prod_\mpr\prod_l(\tilde\mo^{s_{\mpr,l}}/A_{\mpr,l}\tilde\mo^m)$. In particular,
\[v_p\bigl([\tilde\mo^m:A\tilde\mo^m]\bigr)\geq\sum_{\mpr\mid p}\sum_{l=1}^{f_\mpr}v_p\bigl([\tilde\mo^{s_{\mpr,l}}:A_{\mpr,l}\tilde\mo^m]\bigr).\]
The left hand side equals $\tilde d\cdot v_p(\det A)$, hence \eqref{eq9} will follow if we can show that
\begin{equation}\label{eq10}
v_p\bigl([\tilde\mo^{s_{\mpr,l}}:A_{\mpr,l}\tilde\mo^m]\bigr)\geq\frac{\tilde d}{e_\mpr}s_{\mpr,l}\left(v_\mpr(\mn)+\frac{s_{\mpr,l}-1}{2}\right).
\end{equation}

Let us fix $\mpr\mid p$ and $l\in\{1,\dotsc,f_\mpr\}$. We shall assume that $S_{\mpr,l}$ is not empty, for otherwise \eqref{eq10} is trivial. We write \begin{equation}\label{eq14}
t:=s_{\mpr,l}\qquad\text{and}\qquad B:=A_{\mpr,l}
\end{equation}
to simplify notation, and we list the elements of $S_{\mpr,l}$ as $\{\sigma_1,\dotsc,\sigma_t\}$. By \eqref{eq11}, we have
\begin{equation}\label{eq12}
v_p(\tilde\sigma_i(x))=\frac{1}{e_\mpr}v_\mpr(x),\qquad i\in\{1,\dotsc,t\},\qquad x\in k_\mpr^\times.
\end{equation}
We also list the elements of $X$ as $\{x_1,\dotsc,x_m\}$ in such a way that
\[v_\mpr(\mn)\leq v_\mpr(x_1)\leq\dotsb\leq v_\mpr(x_m).\]
In particular, $v_p$ is constant on each column of
\[B=\begin{pmatrix}
\sigma_1(x_1) & \cdots & \sigma_1(x_m) \\
\vdots  & \ddots & \vdots  \\
\sigma_t(x_1)& \cdots & \sigma_t(x_m)
\end{pmatrix},\]
and it is non-decreasing from left to right.
As the $\sigma_i$'s are inertially equivalent, their $\QQ_p$-linear extensions $\tilde\sigma_i$ coincide on $l_\mpr$, and we can identify $l_\mpr$ with its image in $\tilde K$ via any of these embeddings. A nice feature resulting from this identification is that the $\tilde\sigma_i$'s are $l_\mpr$-linear, not just $\QQ_p$-linear.

We are ready to prove \eqref{eq10}. We shall use the fact that the left hand side of \eqref{eq10}, which is $[\tilde\mo^t:B\tilde\mo^m]$ in our new notation \eqref{eq14}, remains unchanged if we multiply $B$ by elements of $\GL_m(\tilde\mo)$ on the right and by elements of $\GL_t(\tilde\mo)$ on the left.
Writing $\mo_{l_\mpr}$ (resp. $\mo_{k_\mpr}$) for the ring of integers of $l_\mpr$ (resp. $k_{\mpr}$), we shall also utilize the fact that the group of units $\mo_{l_\mpr}^\times$ contains a full set of representatives for the nonzero residue classes modulo $\mpr\mo_{k_\mpr}$ in $\mo_{k_\mpr}$. This is because the residue fields of $l_{\mpr}$ and $k_{\mpr}$ have equal cardinality $p^{f_\mpr}$.

First, we perform invertible elementary column operations over $\mo_{l_\mpr}$ in order to increase the additive valuations of the columns of $B$.
Specifically, we run the following algorithm:
\begin{enumerate}[\indent 1.]
\item Set $j=1$.
\item For each $j'\in\{j+1,\dotsc,m\}$, if $v_\mpr(x_{j'})=v_\mpr(x_j)$, then choose $w\in\mo_{l_\mpr}^\times$ such that $v_\mpr(x_{j'}-wx_j)>v_\mpr(x_j)$ and replace $x_{j'}$ by $x_{j'}-wx_j$.
\item Reorder $(x_{j+1},\dotsc,x_m)$ in such a way that $v_\mpr$ is non-decreasing on the new sequence.
\item Replace $j$ by $j+1$.
\item If $j<m$, then go to the second step; otherwise, finish.
\end{enumerate}
We end up with a matrix
\[C=\begin{pmatrix}
\tilde\sigma_1(y_1) & \cdots & \tilde\sigma_1(y_m) \\
\vdots  & \ddots & \vdots  \\
\tilde\sigma_t(y_1)& \cdots & \tilde\sigma_t(y_m)
\end{pmatrix}\]
with $y_1,\dotsc,y_m\in\mo_{k_\mpr}$ such that
\[v_\mpr(\mn)\leq v_\mpr(y_1)<\dotsb<v_\mpr(y_m).\]
In particular, $v_\mpr(y_j)\geq v_\mpr(\mn)+j-1$ for all $j\in\{1,\dotsc,m\}$.

Second, we perform invertible elementary row operations over $\tilde\mo$ to transform $C$ into
\[D=\begin{pmatrix} z_{1,1} & z_{1,2} & \cdots &\ z_{1,t}&\cdots & z_{1,m}\\
0 & z_{2,2} & \cdots & z_{2,t}& \cdots & z_{2,m}\\
\vdots & \ddots & \ddots & \vdots & \ddots & \vdots\\
0 & \ldots & 0 & z_{t,t} &\ldots & z_{t,m}\\
\end{pmatrix}\]
with $z_{i,j}\in\tilde\mo$ such that (cf.\ \eqref{eq12})
\[v_p(z_{i,j})\geq\frac{1}{e_\mpr}\bigl(v_\mpr(\mn)+j-1\bigr),\qquad i\leq j.\]
In particular, $D\tilde\mo^m$ is a subgroup of $\tilde\mn_1\times\dotsb\times\tilde\mn_t$, where
\[\tilde\mn_i:=\left\{z\in\tilde\mo:v_p(z)\geq\frac{1}{e_\mpr}\bigl(v_\mpr(\mn)+i-1\bigr)\right\},\qquad i\in\{1,\dotsc,t\}.\]
This implies, using that $e_\mpr$ divides the ramification degree of the local field extension $\tilde K/\QQ_p$,
\begin{equation}\label{eq13}
v_p\bigl([\tilde\mo^t:D\tilde\mo^m]\bigr)\geq\sum_{i=1}^t v_p\bigl([\tilde\mo:\tilde\mn_i]\bigr)
=\sum_{i=1}^t\frac{\tilde d}{e_\mpr}\bigl(v_\mpr(\mn)+i-1\bigr).
\end{equation}

The inequalities \eqref{eq13} and \eqref{eq10} are equivalent, because their left hand sides are equal, and their right hand sides are also equal (cf.\ \eqref{eq14}). The proof of Proposition~\ref{prop1} is complete.
\end{proof}

\begin{proof}[Proof of Theorem~\ref{thm3}]
For any $g\in G$, it follows from Proposition~\ref{prop1} that
\[v_p\Bigl({\det}^2(\sigma(x))^{\sigma\in gS}_{x\in X}\Bigr)\geq
\sum_{\mpr\mid p}\frac{1}{e_\mpr}\sum_{l=1}^{f_\mpr}\sum_{\sigma\in I_{\mpr,l}}1_{gS}(\sigma)
\left(2v_\mpr(\mn)+\sum_{\sigma'\in I_{\mpr,l}\setminus\{\sigma\}}1_{gS}(\sigma')\right).\]
We average both sides over $g\in G$, utilizing that $G$ acts transitively and faithfully on $\Sigma$.  For any $\sigma\in\Sigma$,
we obtain readily that
\begin{equation}\label{eq18}
\frac{1}{|G|}\sum_{g\in G}1_{gS}(\sigma)=\frac{1}{|G|}\sum_{g\in G}1_{S}(g^{-1}\sigma)=\frac{|S|}{d}=\frac{m}{d}.
\end{equation}
As a consequence, for any distinct $\sigma,\sigma'\in\Sigma$, we see that
\begin{equation}\label{eq7}
\frac{1}{|G|}\sum_{g\in G}1_{gS}(\sigma)1_{gS}(\sigma')\geq
\frac{1}{|G|}\sum_{g\in G}\bigl(1_{gS}(\sigma)+1_{gS}(\sigma')-1\bigr)=\frac{2m}{d}-1.
\end{equation}
This bound is trivial when $m<d/2$, in which case we shall only use that the left hand side is nonnegative. Combining these inequalities and noting that $|I_{\mpr,l}|=e_\mpr$, we infer that
\[\frac{1}{|G|}\sum_{g\in G}v_p\Bigl({\det}^2(\sigma(x))^{\sigma\in gS}_{x\in X}\Bigr)\geq
\sum_{\mpr\mid p}f_\mpr\left(v_\mpr(\mn)\frac{2m}{d}+(e_\mpr-1)\max\left(0,\frac{2m}{d}-1\right)\right).\]
Now from $[\mo:\mpr]=p^{f_\mpr}$ it is clear that
\[\sum_{\mpr\mid p}f_\mpr v_\mpr(\mn)=v_p\bigl([\mo:\mn]\bigr),\]
while \eqref{eq5} implies that
\[\sum_{\mpr\mid p}f_\mpr(e_\mpr-1)=d-f_p=v_p(\Dt).\]
Therefore, the last inequality can be rewritten as
\[\frac{1}{|G|}\sum_{g\in G}v_p\Bigl({\det}^2(\sigma(x))^{\sigma\in gS}_{x\in X}\Bigr)\geq
\frac{2m}{d}v_p\bigl([\mo:\mn]\bigr)+\max\left(0,\frac{2m}{d}-1\right)v_p(\Dt).\]
The rational prime $p$ was arbitrary here, so we have proved \eqref{eq6}.

If $G$ is $2$-homogeneous, then we can improve \eqref{eq7} to
\[\frac{1}{|G|}\sum_{g\in G}1_{gS}(\sigma)1_{gS}(\sigma')=\frac{1}{|G|}\sum_{g\in G}1_{S}(g^{-1}\sigma)1_{S}(g^{-1}\sigma')
=\frac{\binom{|S|}{2}}{\binom{d}{2}}=\frac{m(m-1)}{d(d-1)}.\]
As a result, we can replace $\max\left(0,\frac{2m}{d}-1\right)$ by $\frac{m(m-1)}{d(d-1)}$ in the subsequent argument, and hence also in \eqref{eq6}. The proof of Theorem~\ref{thm3} is complete.
\end{proof}

\begin{proof}[Proof of Theorem~\ref{thm4}]
For any $m$-subset $S\subset\Sigma$, it follows from Proposition~\ref{prop1} that
\[v_p\Bigl({\det}^2(\sigma(x))^{\sigma\in S}_{x\in X}\Bigr)\geq
\sum_{\mpr\mid p}\frac{1}{e_\mpr}\sum_{l=1}^{f_\mpr}\sum_{\sigma\in I_{\mpr,l}}1_{S}(\sigma)
\left(2v_\mpr(\mn)+\sum_{\sigma'\in I_{\mpr,l}\setminus\{\sigma\}}1_{S}(\sigma')\right).\]
We sum both sides over all $m$-subsets $S\subset\Sigma$, using that
\begin{align*}
\sum_{\substack{S\subset\Sigma\\|S|=m}}1_S(\sigma)&=\binom{d-1}{m-1}\quad\text{for any $\sigma\in\Sigma$};\\
\sum_{\substack{S\subset\Sigma\\|S|=m}}1_S(\sigma)1_S(\sigma')&=\binom{d-2}{m-2}\quad\text{for any distinct $\sigma,\sigma'\in\Sigma$.}
\end{align*}
From here we proceed as in the proof of Theorem~\ref{thm3}, and conclude
\[\sum_{\substack{S\subset\Sigma\\|S|=m}}v_p\Bigl({\det}^2(\sigma(x))^{\sigma\in S}_{x\in X}\Bigr)\geq
2\binom{d-1}{m-1}v_p\bigl([\mo:\mn]\bigr)+\binom{d-2}{m-2}v_p(\Dt).\]
The rational prime $p$ was arbitrary here, so the proof of Theorem~\ref{thm4} is complete.
\end{proof}

\section{Archimedean investigations}\label{sect3}

In this section, we prove Theorems~\ref{thm3b}--\ref{thm4b} and Corollaries~\ref{cor1b}--\ref{cor3}. We shall combine Theorems~\ref{thm1} and \ref{thm3} with the following lesser known result of Blichfeldt~\cite{B2}, of which Theorem~\ref{thm1b} is a special case.

\begin{theorem}[Blichfeldt~\cite{B2}]\label{thm5} Let $\Lambda\subset\RR^m$ be a lattice, and let $\cC\subset\RR^m$ be a convex body containing the origin. If $\Lambda\cap\cC$ contains $m$ linearly independent lattice vectors, then
\begin{equation}\label{eq15}
|\Lambda\cap\cC|\leq m!\frac{\vol(\cC)}{\det(\Lambda)}+m\leq(m+1)!\frac{\vol(\cC)}{\det(\Lambda)}.
\end{equation}
\end{theorem}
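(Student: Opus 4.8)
The plan is to reduce to $\Lambda=\ZZ^m$ by a linear change of variables, which replaces $\vol(\cC)$ by $\vol(\cC)/\det(\Lambda)$ and leaves $|\Lambda\cap\cC|$, the origin, and the hypothesis of $m$ linearly independent lattice vectors unchanged. Thus it suffices to prove, when $\Lambda=\ZZ^m$, that $N:=|\ZZ^m\cap\cC|$ satisfies $N\leq m!\,\vol(\cC)+m$. Let $P$ denote the convex hull of the finite set $\ZZ^m\cap\cC$. Since $\cC$ is convex and contains the origin together with $m$ linearly independent lattice vectors, $P$ is a full-dimensional lattice polytope with $P\subseteq\cC$ and $\ZZ^m\cap P=\ZZ^m\cap\cC$, a set of exactly $N\geq m+1$ points.

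Next I would fix a triangulation $\cT$ of $P$ into lattice simplices $\Delta_1,\dots,\Delta_s$ whose vertex set is \emph{all} of $\ZZ^m\cap P$; such a triangulation exists for every finite point configuration (e.g.\ a pulling triangulation). Two facts drive the argument. First, each $\Delta_i$ is a full-dimensional simplex with vertices in $\ZZ^m$, so $\vol(\Delta_i)\geq 1/m!$. Second, $s\geq N-m$. Granting these, $\vol(\cC)\geq\vol(P)=\sum_{i=1}^s\vol(\Delta_i)\geq s/m!\geq(N-m)/m!$, which is the desired bound; unwinding the normalization gives $|\Lambda\cap\cC|\leq m!\,\vol(\cC)/\det(\Lambda)+m$. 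For the weaker second inequality in \eqref{eq15}, observe that $N\geq m+1$ already forces $m!\,\vol(\cC)/\det(\Lambda)\geq N-m\geq 1$, so $m\leq m\cdot m!\,\vol(\cC)/\det(\Lambda)$ and hence $m!\,\vol(\cC)/\det(\Lambda)+m\leq(m+1)!\,\vol(\cC)/\det(\Lambda)$.

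It remains to establish $s\geq N-m$, which is where the work lies. I would use that the simplices of $\cT$ can be enumerated $\Delta_1,\dots,\Delta_s$ so that each $\Delta_i$ with $i\geq 2$ meets some earlier $\Delta_j$ ($j<i$) in an $(m-1)$-dimensional face. This is possible because the ``adjacency graph'' on the simplices of $\cT$ — with $\Delta,\Delta'$ adjacent when $\Delta\cap\Delta'$ is an $(m-1)$-face — is connected: a generic path inside the convex set $P$ joining interior points of two simplices crosses only interior $(m-1)$-faces, each incident to exactly two simplices, so it traces a walk in this graph. Along such an enumeration, the $m$ vertices of the $(m-1)$-face shared by $\Delta_i$ and $\Delta_j$ are already vertices of $\Delta_j$, hence $\Delta_i$ introduces at most one new vertex; starting from the $m+1$ vertices of $\Delta_1$ and exhausting $\ZZ^m\cap P$ after $s$ steps yields $N\leq(m+1)+(s-1)=m+s$.

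The main obstacle is thus the combinatorial input of the last paragraph — the existence of a triangulation supported on all lattice points and the connectedness of its adjacency graph; once these standard facts of polytope combinatorics are in place, the volume estimate and the passage between the two forms of \eqref{eq15} are routine.
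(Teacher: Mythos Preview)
Your proposal is correct and follows essentially the same route as the paper: take the convex hull of the lattice points in $\cC$, pass to a full triangulation using all lattice points as vertices, use connectedness of the facet-adjacency graph on the $m$-simplices to get $s\geq N-m$, and combine with the minimum volume $1/m!$ of a lattice $m$-simplex. The only cosmetic differences are your normalization to $\Lambda=\ZZ^m$ and the paper's explicit reduction to bounded $\cC$; the paper also derives the second inequality directly from $\vol(\cC)\geq\det(\Lambda)/m!$ rather than via the first, but both arguments amount to the same thing.
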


\begin{proof} The second inequality is clear by $\vol(\cC)\geq\det(\Lambda)/m!$, hence we focus on the first inequality. In this proof, a polytope (resp. simplex) will always mean a convex lattice polytope (resp. simplex) with vertices lying in $\Lambda$. For other terminology, we follow the book \cite{DRS}. Without loss of generality, $\cC$ is bounded. Then, by the initial assumptions on $\cC$, the convex hull of $\Lambda\cap\cC$ is an $m$-dimensional polytope, which can be decomposed into $m$-simplices according to \cite[Prop.~2.2.4]{DRS}. The corresponding triangulation of $\Lambda\cap\cC$ can be refined to a full triangulation by decomposing recursively the participating $m$-simplices into smaller $m$-simplices. Alternatively, one can obtain a full triangulation of $\Lambda\cap\cC$ by ordering its elements in such a way that no point belongs to the convex hull of previous points, and then taking the placing/pushing triangulation for that ordering. We fix a full triangulation of $\Lambda\cap\cC$, and we denote by $\cT$ the set of $m$-simplices that participate in it. We define a graph on $\cT$ by declaring that two elements of $\cT$ are connected by an edge if and only if their intersection is an $(m-1)$-simplex. One can show that this graph is connected, which forces
\[|\cT|\geq |\Lambda\cap\cC|-m.\]
For details, see \cite[Th.~2.6.1]{DRS}, \cite[Th.~3.2]{RS}, and their proofs. On the other hand, as $\cC$ is convex and each element of $\cT$ has volume at least $\det(\Lambda)/m!$, we also have
\[\vol(\cC)\geq\vol(\cup\cT)\geq\frac{\det(\Lambda)}{m!}|\cT|.\]
Combining these two bounds, we get the first inequality of \eqref{eq15}. As remarked earlier, the second inequality of \eqref{eq15} is straightforward, so the proof of Theorem~\ref{thm5} is complete.
\end{proof}

\begin{proof}[Proof of Theorem~\ref{thm3b}] If $m=0$, then \eqref{eq2b} and \eqref{eq3b} are trivial, so we shall assume that $0<m<d$. We write $V$ for the $\RR$-span of $\mn\cap\cB$, so that $V$ is an $m$-dimensional $\RR$-subspace of $k\otimes_\QQ\RR$, and $\mn\cap V$ is an $m$-dimensional lattice in $V$. We fix a basis $X\subset\mn$ of $\mn\cap V$, and we think of its elements as the columns of the $d\times m$ complex matrix $M:=(\sigma(x))^{\sigma\in\Sigma}_{x\in X}$. Strictly speaking, $M$ is only defined up to a permutation of the rows and the columns, but this ambiguity disappears once we choose an ordering of $\Sigma$ and $X$. By construction, the columns of $M$ are linearly independent over $\RR$, and we claim that they are also linearly independent over $\CC$. Indeed, if $c:X\to\CC$ satisfies $\sum_{x\in X}c(x)\sigma(x)=0$ for all $\sigma\in\Sigma$, then complex conjugating the equations and switching from $\sigma$ to $\ov{\sigma}$, we get that $\sum_{x\in X}\ov{c(x)}\sigma(x)=0$ for all $\sigma\in\Sigma$. As a result, the real and imaginary parts of $c(x)$ must vanish for all $x\in X$, which proves the claim. Hence $\rank(M)=m$, and there exists an $m$-subset $S\subset\Sigma$ such that $\det(\sigma(x))^{\sigma\in S}_{x\in X}\neq 0$. We fix $S\subset\Sigma$ along with $X\subset\mn$.

For any Galois automorphism $g\in G$, the image of $\det(\sigma(x))^{\sigma\in S}_{x\in X}$ under $g$ equals $\det(\sigma(x))^{\sigma\in gS}_{x\in X}$. Therefore, these $m\times m$ minors of $M$ are nonzero, and by \eqref{eq4} and Theorem~\ref{thm3} they satisfy
\begin{equation}\label{eq16}
\prod_{g\in G}\left|\det(\sigma(x))^{\sigma\in gS}_{x\in X}\right|\gg_d
|\Delta|^{|G|\max\left(0,\frac{m}{d}-\frac{1}{2}\right)}[\mo:\mn]^{|G|\frac{m}{d}}.
\end{equation}
Moreover, the exponent of $|\Delta|$ can be improved to $|G|\frac{m(m-1)}{2d(d-1)}$ when $G$ is $2$-homogeneous.

Fixing $g\in G$ for a moment, the multilinearity of the determinant shows that there is a choice of $\tilde\sigma\in\{\re(\sigma),\im(\sigma)\}$ for each $\sigma\in gS$ such that
\begin{equation}\label{eq17}
\left|\det(\sigma(x))^{\sigma\in gS}_{x\in X}\right|\leq 2^m\left|\det(\tilde\sigma(x))^{\sigma\in gS}_{x\in X}\right|.
\end{equation}
The left hand side is positive, hence the right hand side is also positive. Let $f:\CC^\Sigma\to\RR^{gS}$ be the product of the $\RR$-linear surjections $f_\sigma:\CC\to\RR$ given by
\[f_\sigma(z):=\begin{cases}
\re(z),&\sigma\in gS\ \ \text{and}\ \ \tilde\sigma=\re(\sigma);\\
\im(z),&\sigma\in gS\ \ \text{and}\ \ \tilde\sigma=\im(\sigma);\\
0,&\sigma\not\in gS.
\end{cases}\]
Tautologically, $\tilde\sigma=f_\sigma\circ\sigma$ holds for all $\sigma\in gS$, hence $f$ restricts to an $\RR$-linear isomorphism $V\overset{\sim}\to\RR^{gS}$, and $\Lambda:=f(\mn\cap V)$ is a lattice in $\RR^{gS}$ of covolume $\left|\det(\tilde\sigma(x))^{\sigma\in gS}_{x\in X}\right|$. In addition, $\cC:=f(\cB)$ is an $o$-symmetric convex body in $\RR^{gS}$, which lies in the orthotope $\prod_{\sigma\in gS}[-B_\sigma,B_\sigma]$ by \eqref{eq1}. Clearly, $\Lambda\cap\cC$ contains $f(\mn\cap\cB)$, which in turn contains $m$ linearly independent lattice vectors. Now we combine these observations with Theorem~\ref{thm5} and \eqref{eq17} to infer that
\[|\mn\cap\cB|\leq|\Lambda\cap\cC|\leq 4^m(m+1)!\frac{\prod_{\sigma\in gS}B_\sigma}{\left|\det(\sigma(x))^{\sigma\in gS}_{x\in X}\right|}.\]

We keep the two sides of the last inequality, and take their geometric mean over $g\in G$. Using also \eqref{eq19}, \eqref{eq18}, \eqref{eq16}, we obtain
\begin{equation}\label{eq23}
|\mn\cap\cB|\ll_d\frac{\vol(\cB)^\frac{m}{d}}{|\Delta|^{\max\left(0,\frac{m}{d}-\frac{1}{2}\right)}[\mo:\mn]^{\frac{m}{d}}}.
\end{equation}
Finally, we invoke Theorem~\ref{thm1} to estimate from above the right hand side in terms of the left hand side:
\begin{equation}\label{eq23b}
|\mn\cap\cB|\ll_d|\mn\cap\cB|^\frac{m}{d}|\Delta|^{\min\left(\frac{m}{2d},\frac{1}{2}-\frac{m}{2d}\right)}.
\end{equation}
This bound is equivalent to \eqref{eq2b} in the light of $0<m<d$. If $G$ is $2$-homogeneous, then the exponent of $|\Delta|$ can be improved to $\frac{m(m-1)}{2d(d-1)}$ in \eqref{eq23}, and to $\frac{m(d-m)}{2d(d-1)}$ in \eqref{eq23b}, so that the resulting bound is equivalent to \eqref{eq3b}. The proof of Theorem~\ref{thm3b} is complete.
\end{proof}

\begin{proof}[Proof of Corollary~\ref{cor1b}]
If $\mn\cap\cB$ contains $d$ linearly independent vectors, then \eqref{eq4b} follows from Theorem~\ref{thm1b}. If $\mn\cap\cB$ does not contain $d$ linearly independent vectors, then \eqref{eq4b} follows from Theorem~\ref{thm3b}. The proof of Corollary~\ref{cor1b} is complete.
\end{proof}

\begin{proof}[Proof of Corollary~\ref{cor1}]
In the light of Theorem~\ref{thm1}, the bound \eqref{eq2} follows from \eqref{eq2b}, while the bound \eqref{eq3} follows from \eqref{eq3b}. The proof of Corollary~\ref{cor1} is complete.
\end{proof}

\begin{proof}[Proof of Corollary~\ref{cor2}]
Assume that $\cB$ does not contain a lattice basis of $\mn$. Then, by an observation of Mahler~\cite{M2} (see also \cite[Ch.~2, \S 10.2]{GL}), the scaled body $\tfrac{1}{d}\cB$ does not contain $d$ linearly independent lattice vectors from $\mn$. Hence, by Corollary~\ref{cor1}, it follows that
\[\vol(\cB)\ll_d\vol(\tfrac{1}{d}\cB)\ll_d|\Delta|[\mo:\mn].\]
The proof of Corollary~\ref{cor2} is complete.
\end{proof}

\begin{proof}[Proof of Theorem~\ref{thm4b}] We borrow several ideas from the proof of Theorem~\ref{thm3b} without further mention. Let $x_1,\dotsc,x_m\in\mn$ be linearly independent lattice vectors whose Euclidean norms in $k\otimes_\QQ\RR$ are the successive minima $\lambda_1,\dotsc,\lambda_m$, respectively. Let $X$ be the $m$-set $\{x_1,\dotsc,x_m\}\subset\mn$, and let $V$ be the $\RR$-span of $X$. Then
$V$ is an $m$-dimensional $\RR$-subspace of $k\otimes_\QQ\RR$, and $\mn\cap V$ is an $m$-dimensional lattice in $V$ of successive minima $\lambda_1\leq\dotsb\leq\lambda_m$. In particular, the covolume of $\mn\cap V$ is $\asymp_d\lambda_1\cdots\lambda_m$. We fix an $m$-subset $S\subset\Sigma$ such that $\det(\sigma(x))^{\sigma\in S}_{x\in X}\neq 0$. For any $g\in G$, there exists an orthogonal projection $f$ of $k\otimes_\QQ\RR$ onto an $m$-subspace such that the covolume of $f(\mn\cap V)$ is at least $2^{-m}\left|\det(\sigma(x))^{\sigma\in gS}_{x\in X}\right|$. Since the covolume of $f(\mn\cap V)$ cannot exceed the covolume of $\mn\cap V$, we infer that
\[\lambda_1\cdots\lambda_m\gg_d\left|\det(\sigma(x))^{\sigma\in gS}_{x\in X}\right|,\qquad g\in G.\]
Taking the geometric mean of both sides over $g\in G$, and using \eqref{eq16}, we obtain \eqref{eq21}. Taking the reciprocal of \eqref{eq21}, and then multiplying both sides by \eqref{eq28}, we arrive at \eqref{eq20}. If $G$ is $2$-homogeneous, then the exponent of $|\Delta|$ in \eqref{eq16} can be improved to $|G|\frac{m(m-1)}{2d(d-1)}$, and our argument yields the following variants of \eqref{eq21} and \eqref{eq20}:
\begin{align}\label{eq21b}
\lambda_1\cdots\lambda_m&\gg_d|\Delta|^\frac{m(m-1)}{2d(d-1)}[\mo:\mn]^{\frac{m}{d}};\\
\label{eq20b}
\lambda_{m+1}\lambda_{m+2}\cdots\lambda_d&\ll_d|\Delta|^{\frac{(d-m)(d+m-1)}{2d(d-1)}}[\mo:\mn]^{1-\frac{m}{d}}.
\end{align}
The proof of Theorem~\ref{thm4b} is complete.
\end{proof}

\begin{proof}[Proof of Corollary~\ref{cor3}] We observe that \eqref{eq21} and \eqref{eq21b} are also valid for $m=d$, while \eqref{eq20} and \eqref{eq20b} are also valid for $m=0$. Indeed, these special cases amount to \eqref{eq28}. Now, taking the $m$-th root of \eqref{eq21} and \eqref{eq21b} readily yields the lower bound of \eqref{eq26} and \eqref{eq27}. Similarly, taking the $(d-m)$-th root of \eqref{eq20} and \eqref{eq20b} readily yields the upper bound of \eqref{eq26} and \eqref{eq27} with $m+1$ in place of $m$. The proof of Corollary~\ref{cor3} is complete.
\end{proof}

\section{Connections to the work of McMullen~\cite{M3} and Bhargava et al.~\cite{B}}\label{sect5}

If the number field $k$ is totally real, then we can identify the $\RR$-algebra $k\otimes_\QQ\RR$ with the set of column vectors $(z_\sigma)\in\RR^\Sigma$. The multiplicative group $(\RR^\Sigma)^\times$ acts on $\RR^\Sigma$ by multiplication, hence so does its subgroup
\[A:=\biggl\{(a_\sigma)\in(0,\infty)^\Sigma:\prod_{\sigma\in\Sigma}a_\sigma=1\biggr\}.\]
Let us consider the induced action of $A$ on the space of lattices of $\RR^\Sigma$. Geometrically, the space of lattices can be described as $\GL(\RR^\Sigma)/\GL(\ZZ^\Sigma)$, and the induced action of $A$ is given by left multiplication by positive diagonal matrices of determinant $1$. In particular, this action is continuous and preserves the covolume. The group of totally positive units $\mo^\times_+$ is cocompact in $A$ (cf. Dirichlet's unit theorem) and stabilizes the lattice $\mo$, hence the orbit $A\mo$ is compact. By a striking result of McMullen~\cite[Th.~4.1]{M3}, the compactness of $A\mo$ implies the existence of $a\in A$ such that the successive minima of the lattice $a\mo$ are equal: $\mu_1=\dots=\mu_d$. As we shall explain in the next paragraph, this fact gives rise to a short alternative proof of Corollary~\ref{cor2} (when $k$ is totally real). We note in passing that Levin, Shapira, Weiss~\cite[Th.~1.1]{LSW} have extended McMullen's theorem to closed orbits of lattices; these orbits arise from direct sums of totally real number fields and their full rank additive subgroups \cite[Prop.~5.7]{SW}.

Let $\mu$ be the common value of $\mu_1=\dots=\mu_d$, and let $\cD$ be the closed Euclidean unit ball in $\RR^\Sigma$ centered at the origin. Then $a\mo\cap\mu\cD$ contains $d$ linearly independent vectors. Let $\mn\subset\mo$ be a nonzero ideal, and let $\cB\subset\RR^\Sigma$ be an orthotope of the form $\prod_{\sigma\in\Sigma}[-B_\sigma,B_\sigma]$. We claim that if $\cB$ does not contain a lattice basis of $\mn$, then
\begin{equation}\label{eq24}
\vol(\cB)\leq(2d\mu)^d|\Delta|^{1/2}[\mo:\mn].
\end{equation}
This is sufficient for the conclusion of Corollary~\ref{cor2}, since $\mu^d=\mu_1\cdots\mu_d\asymp_d|\Delta|^{1/2}$. Let us assume that \eqref{eq24} is false. Then $\vol(a\mu^{-1}d^{-1}\cB)>2^d|\Delta|^{1/2}[\mo:\mn]$, hence Theorem~\ref{thm1} guarantees the existence of a nonzero lattice point $x\in\mn\cap a\mu^{-1}d^{-1}\cB$. By our initial remarks, $x\mo\cap xa^{-1}\mu\cD$ contains $d$ linearly independent vectors, so by $\mn\mo\subset\mn$ and $\cB\cD\subset\cB$ it follows that $\mn\cap d^{-1}\cB$ also contains $d$ linearly independent vectors. Finally, by the earlier quoted observation of Mahler~\cite{M2} (see also \cite[Ch.~2, \S 10.2]{GL}), we conclude that $\cB$ contains a lattice basis of $\mn$.

Corollary~\ref{cor2} can also be connected to the work of Bhargava et al.~\cite{B} in multiple ways. Let $k$ be an arbitrary number field, and let $\lambda_1\leq\dotsb\leq\lambda_d$ be the successive minima of $\mo$ embedded as a lattice in $k\otimes_\QQ\RR$. Then \cite[Th.~1.6]{B} states that \begin{equation}\label{eq25}
\lambda_d\ll_d|\Delta|^{1/d}.
\end{equation}
We claim that \eqref{eq25} follows from Corollary~\ref{cor2}, while a weaker version of Corollary~\ref{cor2} follows from \eqref{eq25}. To justify the first claim, we set $B_\sigma:=\frac{1}{d+1}\lambda_d$ for all $\sigma\in\Sigma$ in \eqref{eq1}. Clearly, $\cB$ contains no lattice basis of $\mo$, hence $\vol(\cB)\ll_d|\Delta|$ by Corollary~\ref{cor2}, which is equivalent to \eqref{eq25} by \eqref{eq19}. To justify the second claim, we start from \eqref{eq25}. Let $\mn\subset\mo$ be a nonzero ideal, and let $\cB\subset k\otimes_\QQ\RR$ be a convex body of the form \eqref{eq1} not containing a lattice basis of $\mn$. As $\mo\cap\lambda_d\cD$ contains $d$ linearly independent vectors, we can proceed as in the previous paragraph but with $a\in A$ (resp. $\mu$) replaced by $1\in k$ (resp. $\lambda_d$). We deduce the following variant of \eqref{eq25}:
\[\vol(\cB)\leq(2d\lambda_d)^d|\Delta|^{1/2}[\mo:\mn]\ll_d|\Delta|^{3/2}[\mo:\mn].\]
That is, \eqref{eq25} alone implies a version of Corollary~\ref{cor2} in which $|\Delta|$ is replaced by $|\Delta|^{3/2}$.

\bigskip
\bigskip

\end{document}